\def\R{\mathbb{R}}
\def\N{\mathbb{N}}
\def\Z{\mathbb{Z}}
\def\C{\mathbb{C}}
\newcommand{\Op}{\mathrm{Op}}
\newcommand{\OpW}{\mathrm{Op^{\mathcal{W}}}}
\renewcommand{\leq}{\leqslant}
\newtheorem{theorem}{Theorem}
\newtheorem{proposition}{Proposition}
\newtheorem{corollary}{Corollary}
\newtheorem{definition}{Definition}
\newtheorem{lemma}{Lemma}
\theoremstyle{definition}\newtheorem{example}{Example}
\theoremstyle{definition}\newtheorem{remark}{Remark}
\title{Quantum ergodicity and quantum limits for sub-Riemannian Laplacians}
\author{Yves Colin de Verdi\`ere\footnote{Universit\'e de Grenoble,
Institut Fourier, Unit{\'e} mixte de recherche CNRS-UJF 5582, BP 74, 38402-Saint Martin d'H\`eres Cedex, France (\texttt{yves.colin-de-verdiere@ujf-grenoble.fr}).}
\and
Luc Hillairet \footnote{Universit\'e d'Orl\'eans, F\'ed\'eration Denis Poisson, Laboratoire MAPMO, route de Chartres, 45067 Orl\'eans Cedex 2, France (\texttt{luc.hillairet@univ-orleans.fr}).}
\and
Emmanuel Tr\'elat\footnote{Sorbonne Universit\'es, UPMC Univ Paris 06, CNRS UMR 7598, Laboratoire Jacques-Louis Lions, Institut Universitaire de France, F-75005, Paris, France (\texttt{emmanuel.trelat@upmc.fr}).}
}
\begin{document}

\maketitle

\begin{abstract}
This paper is a proceedings version of \cite{CHT-I}, in which we state a Quantum Ergodicity (QE) theorem on a 3D contact manifold, and in which we establish some properties of the Quantum Limits (QL).

We consider a sub-Riemannian (sR) metric on a compact 3D manifold with an oriented contact distribution. There exists a privileged choice of the contact form, with an associated Reeb vector field and a canonical volume form that coincides with the Popp measure. We state a QE theorem for the eigenfunctions of any associated sR Laplacian, under the assumption that the Reeb flow is ergodic. The limit measure is given by the normalized canonical contact measure.
To our knowledge, this is the first extension of the usual Schnirelman theorem to a hypoelliptic operator.
We provide as well a decomposition result of QL's, which is valid without any ergodicity assumption.
We explain the main steps of the proof, and we discuss possible extensions to other sR geometries.
\end{abstract}

\section{Introduction}
The property of Quantum Ergodicity (QE) is defined as follows.
Let $M$ be a metric space, endowed with a measure $\mu$ defined on a suitable compactification of $M$, and let $T$ be a self-adjoint operator on $L^2(M,\mu)$, bounded below and having a compact resolvent (and hence a discrete spectrum). Let $(\phi_n)_{n\in\N^*}$ be a (complex-valued) Hilbert basis of $L^2(M,\mu)$, consisting of eigenfunctions of $T$, associated with the ordered sequence of eigenvalues $\lambda_1\leq\cdots\leq\lambda_n\leq\cdots$. 
We say that QE holds for $T$ is there exist a probability measure $\nu$ on $M$ and a density-one sequence $(n_j)_{j\in\N^*}$ of positive integers such that the sequence of probability measures $|\phi_{n_j}|^2 d\mu$ converges weakly to $\nu$.
The measure $\nu$ may be different from some scalar multiple of $\mu$, and may even be singular with respect to $\mu$.

Microlocal versions of QE hold true in general and are stated in terms of pseudo-differential operators, yielding a result of the kind
$$
\langle\Op(a)\phi_{n_j},\phi_{n_j}\rangle \rightarrow \int_\Sigma a\, d\tilde\nu ,
$$
for every classical symbol $a$ of order $0$, where $\tilde\nu$ is a lift to $T^\star M$ of the measure $\nu$ and $\Sigma$ is a subset of $T^\star M$ (that have to be identified).

Such a QE property provides an insight on the way eigenfunctions concentrate when considering highfrequencies.

\medskip

In the existing literature, QE results exist in different contexts, but always for elliptic operators. The first historical QE result is due to Shnirelman in 1974 (see \cite{Shn-74} for a sketch of proof), stating that, on a compact Riemannian manifold $(M,g)$, if the geodesic flow is ergodic, then we have QE for any orthonormal basis of eigenfunctions of the Laplace-Beltrami operator, with $d\nu$ equal to the normalized Riemannian volume (and $d\tilde\nu$ equal to the normalized Liouville measure).
The complete proof has been obtained later in \cite{yCdV-85,Zel-87}.
Then, the Shnirelman theorem has been extended to the case of manifolds with boundary in \cite{GL-93,ZZ-96}, to the case of discontinuous metrics in \cite{JSSC-14} and to the semiclassical regime in \cite{HMR-87}.

\medskip

We provide here a Shnirelman theorem in a hypoelliptic context. 

\begin{theorem}\label{thm1}
Let $M$ be compact connected 3D manifold (without boundary), endowed with an arbitrary volume form $d\mu$ and with a Riemannian contact structure. We consider the associated sub-Riemannian Laplacian $\triangle_{sR}$.
If the Reeb flow is ergodic for the Popp measure, then we have QE, and the limit measure is the probability Popp measure.
\end{theorem}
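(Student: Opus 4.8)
The plan is to follow the Shnirelman--Zelditch--Colin de Verdi\`ere scheme, adapted to the degeneracy of the sub-Riemannian symbol. Write $\mathcal{D}$ for the contact distribution and $\Sigma = \mathcal{D}^\perp \subset T^\star M$ for its annihilator, a rank-one subbundle (so $\dim\Sigma = 4$) which is exactly the characteristic cone $\{g^\star = 0\}$ of the principal symbol $g^\star$ of $\triangle_{sR}$. I would first reduce the statement to a microlocal one: it suffices to produce a density-one sequence $(n_j)$ along which $\langle \Op(a)\phi_{n_j},\phi_{n_j}\rangle \to \int a\, d\tilde\nu$ for every symbol $a$ of order $0$, where $\tilde\nu$ is a probability measure on $\Sigma$ whose pushforward to $M$ is the normalized Popp measure; testing against symbols of multiplication operators then yields the weak convergence $|\phi_{n_j}|^2 d\mu \to \nu$. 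As in the elliptic case, the two analytic inputs are a local Weyl law identifying the \emph{mean} of the matrix elements, and a \emph{quantum variance} estimate that vanishes thanks to ergodicity.

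Second, I would prove that every quantum limit is carried by $\Sigma$. Away from $\Sigma$ the quadratic form $g^\star$ is positive, so $\triangle_{sR}$ is elliptic there, and the standard elliptic estimate forces the microlocal mass of $\phi_n$ outside any conic neighbourhood of $\Sigma$ to tend to $0$ as $\lambda_n\to\infty$. This is the hypoelliptic analogue of concentration on the cosphere bundle, and it is insensitive to the choice of $d\mu$ since it uses only the principal symbol. The heart of the argument is then a microlocal normal form for $\triangle_{sR}$ near $\Sigma$: using contact coordinates modelled on the Heisenberg group, after a partial Fourier transform in the Reeb direction (with dual variable $\zeta$, homogeneous of degree $1$ on $\Sigma$) the operator is conjugated, modulo lower-order terms, to a transverse quantum harmonic oscillator of frequency $|\zeta|$ tensored with the action in the Reeb variable. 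The transverse spectrum is pinned to the Landau levels $(2k+1)|\zeta|$, so an eigenfunction with $\triangle_{sR}\phi_n = \lambda_n\phi_n$ localises on $|\zeta|\approx\lambda_n/(2k+1)$, and the residual longitudinal dynamics is governed by the Reeb field $R_0$.

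Third, using this normal form I would construct a first-order (pseudo-)differential operator $P$, the \emph{quantum Reeb generator}, whose symbol restricted to $\Sigma$ is the contact Hamiltonian $\zeta$ and which commutes with $\triangle_{sR}$ modulo smoothing and lower-order operators near $\Sigma$. An Egorov theorem for $P$ then gives $e^{-itP}\Op(a)e^{itP} = \Op(a\circ\Phi^R_t)+\text{l.o.t.}$, where $\Phi^R_t$ is the lifted Reeb flow, since the Hamiltonian flow of $\zeta$ on $\Sigma$ is exactly the Reeb flow. Because $P$ commutes with $\triangle_{sR}$, the eigenspaces are approximately preserved, so $\langle \Op(a)\phi_n,\phi_n\rangle = \langle e^{-itP}\Op(a)e^{itP}\phi_n,\phi_n\rangle + o(1)$; letting $\lambda_n\to\infty$ shows that any quantum limit $\tilde\nu$ on $\Sigma$ is invariant under the Reeb flow.

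Finally, a local Weyl law for the hypoelliptic operator (the density of states satisfies $N(\lambda)\sim c\,\lambda^2$, the exponent being half the Hausdorff dimension $Q=4$) identifies the Ces\`aro mean $N(\lambda)^{-1}\sum_{\lambda_n\le\lambda}\langle\Op(a)\phi_n,\phi_n\rangle$ with $\int a\, d\tilde\mu$, the base projection of $\tilde\mu$ being the normalized Popp measure; only the principal structure enters the leading term, which is why the limit is Popp and not $\mu$. Writing $\bar a$ for the Reeb average of $a|_\Sigma$, ergodicity of the Reeb flow gives $\bar a = \int a\, d\tilde\mu$ almost everywhere, and the usual Shnirelman computation --- expanding the quantum variance $N(\lambda)^{-1}\sum_{\lambda_n\le\lambda}|\langle\Op(a)\phi_n,\phi_n\rangle-\bar a|^2$, averaging over $t$ via the Egorov step above, and applying the von Neumann--Birkhoff theorem --- shows that this variance tends to $0$. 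A Chebyshev inequality and a diagonal extraction over a countable dense family of symbols then produce the density-one subsequence. I expect the main obstacle to be the third step: since $g^\star$ vanishes to second order on $\Sigma$, the naive Egorov theorem yields only trivial dynamics, so one must descend to the subprincipal level, justify the transverse harmonic-oscillator normal form globally along $\Sigma$, and control the time scale on which the approximation $e^{-itP}\Op(a)e^{itP}\approx\Op(a\circ\Phi^R_t)$ remains valid, long enough to feed the ergodic averaging.
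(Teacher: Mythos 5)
Your route is, in outline, the same as the paper's: concentration on the characteristic cone $\Sigma$, a Heisenberg-modelled normal form near $\Sigma$ (the paper's quantum Birkhoff normal form $-\triangle_{sR}=R\Omega+V_0+\mathrm{O}_\Sigma(\infty)$, whose factor $R$ is exactly your ``quantum Reeb generator'' $P$), an Egorov argument transporting symbols along the Reeb flow, a heat-kernel/Karamata Weyl law identifying the Ces\`aro mean with the Popp measure, and the variance--Koopman--von Neumann--diagonal-extraction endgame. But there is a genuine gap at the pivotal step, and it is not the one you flag at the end. You assert that since $P$ commutes with $\triangle_{sR}$ (modulo lower order), ``the eigenspaces are approximately preserved, so $\langle \Op(a)\phi_n,\phi_n\rangle=\langle e^{-itP}\Op(a)e^{itP}\phi_n,\phi_n\rangle+o(1)$''. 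Commutation only guarantees that $e^{itP}\phi_n$ stays (approximately) inside the $\lambda_n$-eigenspace; it does \emph{not} give invariance of diagonal matrix elements, because $\langle e^{-itP}\Op(a)e^{itP}\phi_n,\phi_n\rangle=\langle \Op(a)\,e^{itP}\phi_n,\,e^{itP}\phi_n\rangle$, and $e^{itP}\phi_n$ can rotate within the eigenspace away from any phase multiple of $\phi_n$. In the Riemannian Shnirelman proof this issue never appears because the propagator $e^{it\sqrt{\triangle}}$ is a \emph{function of} $\triangle$, so every $\phi_n$ is an exact eigenvector of it; here $P$ is not a function of $\triangle_{sR}$, and the degeneracies are enormous (in the flat Heisenberg quotient the eigenvalue $(2\ell+1)|m|$ has multiplicity $|m|\to\infty$). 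Since the theorem must hold for an \emph{arbitrary} eigenbasis --- and with unbounded multiplicities QE for one basis does not transfer to another --- you cannot evade this by passing to a joint eigenbasis of $P$ and $\triangle_{sR}$.

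The paper closes exactly this gap by a preparation step that your proposal lacks: before averaging along the Reeb direction, the observable is averaged with respect to the harmonic-oscillator factor, $\bar A=\frac{1}{2\pi}\int_0^{2\pi}e^{is\Omega}Ae^{-is\Omega}\,ds$, which leaves the symbol on $\Sigma$ unchanged (since $\sigma_P(\Omega)$ vanishes there) and, by the periodicity $\exp(2i\pi\Omega)=\mathrm{id}$, achieves $[\bar A,\Omega]=0$. Then, setting $A_t=e^{-itR}\bar Ae^{itR}$ and using the factorization $-\triangle_{sR}=R\Omega=\Omega R$ together with $[A_t,\Omega]=0$, one gets the \emph{exact} identity $\langle[A_t,R]\phi_n,\phi_n\rangle=\frac{1}{\lambda_n}\langle[RA_tR,\Omega]\phi_n,\phi_n\rangle=0$ for every eigenfunction of every eigenbasis, so $\langle A_t\phi_n,\phi_n\rangle$ is genuinely $t$-independent and the time average can be fed to the von Neumann ergodic theorem; the ``main obstacle'' is this algebra, not the Egorov time scale (times stay fixed, and $T\to\infty$ only after the spectral limit, exactly as in the classical proof). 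Two smaller corrections: (i) your claim that the microlocal mass of each $\phi_n$ outside conic neighbourhoods of $\Sigma$ tends to $0$ is false --- quantum limits can charge $U^\star M$ (the paper's Theorem 3 allows $\beta_0\neq0$, invariant under the sR geodesic flow); concentration on $\Sigma$ is a Ces\`aro/density-one statement, obtained by comparing the elliptic-region bound $\mathrm{O}(\lambda^{3/2})$ with $N(\lambda)\sim c\lambda^2$; (ii) for the full microlocal statement with arbitrary order-zero symbols one needs either $a$ even with respect to the two sheets $\Sigma^{\pm}$ or real-valued eigenfunctions, a hypothesis your formulation of the limit measure silently skips (it is harmless for Theorem \ref{thm1} itself, whose test symbols are fiber-independent).
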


Several definitions are due to understand the content of this theorem. In Section \ref{sec2}, we are thus going to recall:
\begin{itemize}
\item what is a sub-Riemannian Laplacian;
\item what are a contact structure, the Reeb flow, and the Popp measure.
\end{itemize}
We will then provide a more general (microlocal) version of Theorem \ref{thm1}.
We also provide a second result, valid without any ergodicity assumption, stating that every quantum limit that is supported on the characteristic manifold of the sR Laplacian is invariant under the Reeb flow, and that most QL satisfy the previous assumption on their support.

We will explain the main steps of the proof in Section \ref{sec3}, and we will conclude in Section \ref{sec4} by giving some perspectives and open problems.

\section{Sub-Riemannian Laplacians in the 3D contact case}\label{sec2}
\subsection{General definition}
We first recall that a sub-Riemannian (sR) structure is a triple $(M,D,g)$, where $M$ is a smooth manifold, $D$ is a smooth subbundle of $TM$ (called horizontal distribution), and $g$ is a fibered Riemannian metric on $D$.

\begin{example}
We speak of a contact structure when $M$ is of dimension $3$ and $D$ is a contact distribution, that is, we can write $D=\ker\alpha$ locally around any point, for some one-form $\alpha$ such that $\alpha\wedge d\alpha\neq 0$ (locally). The local Darboux model in $\R^3$ at the origin is given by $\alpha = dz+x\, dy$, and then, locally, we can write $D=\mathrm{Span}(X,Y)$, with $X$ and $Y$ the vector fields defined by $X=\partial_x$ and $Y=\partial_y-x\partial_z$. Note that $D$ is of codimension one, and the Lie bracket $[X,Y]=-\partial_z$ generates the missing direction.
\end{example}

\paragraph{Sub-Riemannian Laplacian.}
In order to define a sub-Riemannian Laplacian $\triangle_{sR}$, let us choose a smooth volume form $d\mu$ on $M$, the associated measure being denoted by $\mu$.
Let $L^2(M,\mu)$ be the set of complex-valued functions $u$ such that $|u|^2$ is $\mu$-integrable over $M$.
Then, in whole generality, $-\triangle_{sR} $ is the nonnegative self-adjoint operator on $L^2(M,\mu)$ defined as the Friedrichs extension of the Dirichlet integral
$$
Q(\phi)=\int _M \Vert d\phi \Vert_{g^*}^2 \, d\mu, 
$$
where the norm of $d\phi $ is calculated with respect to the co-metric $g^\star$ on $T^\star M $ associated with $g$.
The sR Laplacian $\triangle_{sR}$ depends on the choice of $g$ and of $d\mu$.

Let us give two equilavent definitions of $\triangle_{sR}$:
\begin{itemize}
\item We have
$$
\boxed{\triangle_{sR}\phi = \mathrm{div}_\mu(\nabla_{sR}\phi)}
$$
for every smooth function $\phi$ on $M$,
where $\mathrm{div}_\mu$ is the divergence operator associated with the volume form $d\mu$, defined by $\mathcal{L}_X d\mu = \mathrm{div}_\mu(X)\, d\mu$ for any vector field $X$ on $M$, and where the horizontal gradient $\nabla_{sR}\phi$ of a smooth function $\phi$ is the unique section of $D$ such that $g_q(\nabla_{sR}\phi(q),v)=d\phi(q).v$, for every $v\in D_q$.

\item If $(X_1,\ldots,X_m)$ is a local $g$-orthonormal frame of $D$, then $\nabla_{sR}\phi=\sum_{i=1}^m (X_i\phi)X_i$, and
$$
\boxed{\triangle_{sR}= - \sum_{i=1}^m X_i^\star X_i = \sum_{i=1}^m \left( X_i^2 + \mathrm{div}_\mu(X_i) X_i \right) } 
$$

\end{itemize}

\paragraph{Hypoellipticity.}
A well known theorem due to H\"ormander (see \cite{Ho-67}) states that, under the assumption $\mathrm{Lie}(D) = TM$, the operator $\triangle_{sR}$ is hypoelliptic (and even subelliptic), has a compact resolvent and thus a discrete spectrum $\lambda_1\leq\lambda_2\leq\cdots\leq\lambda_n\rightarrow+\infty$. %Let $(\phi_n)_{n\in\N^*}$ be an associated orthonormal eigenbasis of $L^2(M,\mu)$.

\begin{remark}
Denoting by $h_X(q,p)=\langle p,X(q)\rangle$ the Hamiltonian associated with a vector field $X$ (in canonical coordinates), the principal symbol of $-\triangle_{sR}$ is 
$$
\sigma_P(-\triangle_{sR}) = \sum_{i=1}^m h_{X_i}^2 = g^\star ,
$$ 
(it coincides with the co-metric $g^\star$), and its sub-principal symbol is zero. 

Actually, in our main result, $ \triangle_{sR}$ may be any self-adjoint second-order differential operator whose principal symbol is $g^\star$, whose sub-principal symbol vanishes.
\end{remark}

\begin{remark}
The Hamiltonian function $\frac{1}{2}g^\star$ induces a Hamiltonian vector field $X_g$ which generates the so-called normal geodesics of the sR structure (see \cite{Mo-02}).
\end{remark}

\subsection{The 3D contact case}
We assume that the manifold $M$ is compact and of dimension $3$, and that $D$ is an oriented contact distribution, that is, we can write $D=\ker\alpha$ for some one-form $\alpha$ globally defined such that $\alpha\wedge d\alpha\neq 0$ (see \cite{CHT-I} for the extension to the non-orientable case).

In order to define the canonical contact measure and the Reeb vector field, we need to normalize the contact form defining $D$.
There exists a unique contact form $\alpha_g$ such that $d\alpha_g (X,Y)=1$ for any positive $g$-orthonormal local frame $(X,Y)$ of $D$. Equivalently, ${(d\alpha_g)}_{\vert D}$ coincides with the volume form induced by $g$ on $D$.

\paragraph{Popp measure.}
We define the density $dP= \vert\alpha_g \wedge d\alpha_g\vert$ on $M$. In general, $dP$ differs from $d\mu$. The corresponding measure $P$ is called the \textit{Popp measure} in the existing literature (see \cite{Mo-02}, where it is defined in the general equiregular case). Of course, here, this measure is the canonical contact measure associated with the normalized contact form $\alpha_g$.
We also define the \textit{probability Popp measure}
\begin{equation*}%\label{def_Popp}
\nu = \frac{P}{P(M)} .
\end{equation*}

\paragraph{Reeb flow.}
The \textit{Reeb vector field} $Z$ of the contact form $\alpha_g$ is defined as the unique vector field such that $\iota_Z\alpha_g=1$ and $\iota_Z \, d\alpha_g =0$. Equivalently, $Z$ is the unique vector field such that
\begin{equation*}%\label{proprieteReeb}
[X,Z]\in D,\quad [Y,Z]\in D,\quad [X,Y]=-Z\mod D ,
\end{equation*}
for any positive orthonormal local frame $(X,Y)$ of $D$.
The \emph{Reeb flow} $\mathcal{R}_t$ is defined as the flow generated on $M$ by the vector field $Z$.

\begin{remark}
The measure $\nu$ is invariant under the Reeb flow $\mathcal{R}_t$. This invariance property is important in view of identifying a QE result.
\end{remark}

\begin{remark}
Denoting by $h_Z$ the Hamiltonian associated with $Z$, geodesics with highfrequencies in $h_Z$ oscillate around the trajectories of $\pm Z$. From the point of view of semiclassical analysis, this part of the dynamics is expected to be the dominant one.
\end{remark}

\subsection{The main results}
We consider the cotangent space $(T^\star M,\omega)$, endowed with its canonical symplectic form.
Let $\Sigma \subset T^\star M$ be the characteristic manifold of $-\triangle_{sR}$. We have $\Sigma=(g^\star)^{-1}(0) = D^\perp$ (annihilator of $D$), and $\Sigma$ coincides with the cone of contact forms defining $D$. In particular, we have
\begin{equation*}%\label{defSigma}
\Sigma = \{ (q,s\alpha_g(q))\in T^\star M \mid q\in M, s\in\R \}.
\end{equation*}
In the 3D contact case, we have as well $\Sigma = \{h_X=h_Y=0\}$ if $D=\mathrm{Span}(X,Y)$, with $(X,Y)$ a local $g$-orthonormal frame of $D$.

\paragraph{Shnirelman theorem in the 3D contact case.}
We are now in a position to provide a more precise statement of Theorem \ref{thm1}.

\begin{theorem}\label{theo:main} 
Let $M$ be a smooth connected compact three-dimensional manifold, equipped with an arbitrary smooth volume form $d\mu$. Let $D\subset TM$ be a smooth oriented contact subbundle, and let $g$ be a smooth Riemannian metric on $D$. 
Let $\triangle_{sR}$ be the sR Laplacian associated with the contact sub-Riemannian structure $(M,D,g)$ and with the volume form $d\mu$.

We assume that the Reeb flow is \emph{ergodic}\footnote{The Reeb flow is ergodic on $(M,\nu)$ if any measurable invariant subset of $M$ is of measure $0$ or $1$.}.
Then, for any \emph{real-valued} orthonormal Hilbert basis $(\phi_n)_{n\in \N^*}$ of $L^2(M,\mu)$ consisting of eigenfunctions of $\triangle_{sR}$
 associated with the eigenvalues $(\lambda_n)_{n\in\N^*}$, there exists a density-one sequence $(n_j)_{j\in\N^*}$ of positive integers such that
\begin{equation*}%\label{mainlimit}
\lim_{j\rightarrow+\infty} \left\langle A\phi_{n_j}, \phi_{n_j}\right\rangle  =  \frac{1}{2}\int_M \big( a(q,\alpha_g(q)) + a(q,-\alpha_g(q)) \big) \, d\nu ,
\end{equation*}
for every classical pseudo-differential operator $A$ of order $0$ with principal symbol $a$.

In particular, we have
$$
\lim_{j\rightarrow+\infty} \int_M f |\phi_{n_j}|^2\, d\mu = \int_M f\, d\nu ,
$$
for every continuous function $f$ on $M$.
\end{theorem}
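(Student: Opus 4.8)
The plan is to follow the classical Shnirelman--Zelditch--Colin de Verdi\`ere strategy (microlocal Weyl law, Egorov theorem, variance estimate, extraction of a density-one subsequence), adapting each step to the hypoelliptic setting where the normal geodesic flow \emph{degenerates} on $\Sigma$. The key new phenomenon to confront is that $\Sigma=D^\perp$ is exactly the set of fixed points of that flow: writing $g^\star=h_X^2+h_Y^2$, its Hamiltonian vector field $H_{g^\star}=2h_X H_{h_X}+2h_Y H_{h_Y}$ vanishes on $\{h_X=h_Y=0\}=\Sigma$. Hence the naive Egorov argument is empty on $\Sigma$, and the effective dynamics must be extracted at a finer (second-microlocal, normal-form) scale, where it turns out to be the Reeb flow. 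I would organize the whole proof around this observation.

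First I would set up the semiclassical framework, writing $h=\lambda_n^{-1/2}$ so that each $\phi_n$ solves exactly $(-h^2\triangle_{sR}-1)\phi_n=0$, a quasimode at energy $1$ for the semiclassical operator with principal symbol $g^\star$. The associated defect measures are carried by $\{g^\star=1\}$, and since $g^\star$ is $2$-homogeneous the condition $g^\star(q,\xi)=1$ with $|\xi|\to\infty$ forces the direction $\xi/|\xi|$ to approach $\Sigma$. Combined with the subelliptic Weyl law for $3$D contact structures (homogeneous dimension $4$, so $N(\lambda)\sim c\,\lambda^2$, with spectral mass accumulating near $\Sigma$), this yields the first structural fact: every quantum limit, tested against a classical symbol of order $0$, is supported on the two rays $\{(q,\pm s\,\alpha_g(q)),\ s>0\}$ of $\Sigma$. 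This is precisely the support hypothesis of the ergodicity-free decomposition result. The microlocal Weyl law then identifies the average $N(\lambda)^{-1}\sum_{\lambda_n\le\lambda}\langle A\phi_n,\phi_n\rangle$ with the value $\omega(a)$ of a state $\omega$ on symbols, and a computation with the subelliptic heat (or resolvent) kernel identifies $\omega$ with integration against $\tfrac12(\delta_{+\alpha_g}+\delta_{-\alpha_g})\otimes\nu$, the base marginal being forced to be the Popp measure by its intrinsic (metric-and-volume-independent) nature.

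The crux is the Egorov/invariance step. To produce a nontrivial flow on $\Sigma$ I would pass to a microlocal normal form near $\Sigma$. Using $\{h_X,h_Y\}=h_{[X,Y]}=-h_Z$, the pair $(h_X,h_Y)$ behaves, transversally to $\Sigma$, like the position--momentum pair of a harmonic oscillator of frequency $|h_Z|$; quantizing, $-\triangle_{sR}$ is microlocally conjugate near $\Sigma$ to a transverse quantum harmonic oscillator (ladder of levels $(2N+1)|h_Z|$) coupled to a residual drift along $\Sigma$. Computing the fibre dynamics shows that $(h_X,h_Y)$ rotates at angular velocity $2h_Z$, and averaging over this fast rotation leaves exactly the flow of $h_Z$, i.e. the Reeb flow, as the effective slow dynamics. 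At the quantum level this averaging is an Egorov theorem for the Reeb evolution: read at the appropriate time scale on a fixed oscillator level, the propagator conjugates a zeroth order $A$ into $A$ transported by $\mathcal R_t$ up to lower order. Since the $\phi_n$ are eigenfunctions, $\langle A\phi_n,\phi_n\rangle=\langle\langle A\rangle_T\phi_n,\phi_n\rangle$ for the time average $\langle A\rangle_T=\tfrac1T\int_0^T\mathcal R_t^*A\,\mathcal R_t\,dt$, whose symbol on $\Sigma$ is the Reeb average of $a$; this shows, with no ergodicity assumption, that every quantum limit supported on $\Sigma$ is Reeb-invariant. I expect this normal-form construction, and in particular the uniform control of the coupling between the transverse oscillator and the Reeb drift as $h\to0$, to be the main technical obstacle.

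Granting these ingredients, the conclusion is the standard variance argument. After subtracting its mean we may assume $\omega(a)=0$; ergodicity of $\mathcal R_t$ for $\nu$ gives $\langle A\rangle_T\to0$ in the $L^2$ sense of $\omega$ as $T\to\infty$, and feeding $\langle A\rangle_T^*\langle A\rangle_T$ into the Weyl law yields $N(\lambda)^{-1}\sum_{\lambda_n\le\lambda}|\langle A\phi_n,\phi_n\rangle-\omega(a)|^2\to0$. A standard density-one extraction lemma, applied to a countable dense family of symbols and combined by a diagonal argument, then produces a single density-one subsequence $(n_j)$ with $\langle A\phi_{n_j},\phi_{n_j}\rangle\to\omega(a)$ for all $A$. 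The symmetrization $\tfrac12(a(q,\alpha_g)+a(q,-\alpha_g))$ is where the hypothesis that the $\phi_n$ are \emph{real-valued} enters: a real sequence has semiclassical measure invariant under $p\mapsto-p$, equivalently $\langle A\phi_n,\phi_n\rangle$ depends only on the $\pm$-symmetrization of $a$, so the mass on $\Sigma$ splits equally between the rays $+\alpha_g$ and $-\alpha_g$, on each of which the (time-reversed) Reeb flow is ergodic for the same measure $\nu$. Averaging the two rays gives the stated limit, and specializing $A$ to multiplication by a continuous function $f$ yields the weak convergence of $|\phi_{n_j}|^2\,d\mu$ to $\nu$.
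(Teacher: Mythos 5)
Your proposal is correct and follows essentially the same route as the paper: a microlocal Weyl law obtained from subelliptic heat-kernel asymptotics identifying the Ces\`aro average with the symmetrized Popp measure on $S\Sigma$, a microlocal normal form near $\Sigma$ factorizing $-\triangle_{sR}$ into a transverse harmonic oscillator times a Reeb-type operator (the paper's $-\triangle_{sR}=R\Omega+V_0+\mathrm{O}_\Sigma(\infty)$, with your ``fixed oscillator level'' argument playing the role of the paper's preparation $[A,\Omega]=0$ and commutator lemma), the variance estimate via the von Neumann ergodic theorem, the Koopman--von Neumann extraction with a diagonal argument, and real-valuedness of the eigenfunctions used exactly as in the paper to justify the symmetrization over $\pm\alpha_g$. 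One inessential slip worth noting: your intermediate claim that \emph{every} quantum limit of the eigenbasis is supported on $\Sigma$ is false as stated (in the flat Heisenberg case the eigenfunctions $e^{i\sqrt{2\pi}(jx+ky)}$, with eigenvalues $2\pi(j^2+k^2)$, have quantum limits supported in $U^\star M$, away from $\Sigma$); only the density-one version (part 2 of Theorem \ref{thm2}) is true, but since your final argument goes through the variance estimate rather than through per-eigenfunction support, this overstatement does not affect the proof.
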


Here, the notation $\langle\ ,\ \rangle$ stands for the (Hermitian) inner product in $L^2(M,\mu)$.

We prove in \cite{CHT-I} that the above result still holds with a complex-valued eigenbasis, provided the principal symbol $a$ satisfies $a(q,\alpha_g(q)) = a(q,-\alpha_g(q))$, for every $q\in M$. We also prove that the result is valid as well if $D$ is not orientable, without any additional assumption (note that, in that case, the contact form and thus the Reeb vector field $Z$ are not defined globally).

Note that the subsequence of density one depends on the choice of the eigenbasis $(\phi_n)_{n\in\N^*}$.
We stress that our result is valid for any choice of a smooth volume form $d\mu$ on $M$.

\begin{remark}
The classical Shnirelman theorem is established in the Riemannian setting under the assumption that the Riemannian geodesic flow is ergodic on $(S^\star M,\lambda_L)$, where the limit measure is the Liouville measure $\lambda_L$ on the unit cotangent bundle $S^\star M$ of $M$. In contrast, here the Liouville measure on the unit bundle $g^\star=1$ has infinite total mass (where $g^\star$ is the co-metric on $T^\star M $ associated with $g$), and hence the QE property cannot be formulated in terms of the geodesic flow.

Another interesting difference is that, in the Riemannian setting, QE says that most eigenfunctions equidistribute in the phase space, whereas here, in the 3D contact case, they concentrate on $\Sigma=D^\perp$, the contact cone that is the characteristic manifold of $\triangle_{sR}$.
\begin{table}[h]
\begin{center}
\begin{tabular}[width=1mm]{| l || c | c |}
%\cline{2-3}
\hline
& Riemannian case & SR case \\
\hline\hline 
Ergodicity assumption & geodesic flow on $(S^*M,\textrm{Liouville})$ & Reeb flow on $(M,\textrm{Popp})$ \\
\hline
Quantum limit & Liouville measure on $S^*M$ & Popp measure on $M$ \\
\hline
Microlocal concentration & on $S^*M$ & on $S\Sigma=\Sigma\cap\{\vert h_Z\vert=1\}$\\
\hline
\end{tabular}
\end{center}
\end{table}
\end{remark}

\begin{remark}[Examples of ergodic Reeb flows in dimension $3$]
Let us give two general constructions providing examples of ergodic Reeb flows on 3D contact manifolds:
\begin{itemize}
\item Geodesic flows:
Let $X$ be a two-dimensional compact Riemannian surface, endowed with a Riemannian metric $h$, and let $M=S^\star X$ be the unit cotangent
 bundle of $X$.
The closed three-dimensional manifold $M$ is then naturally endowed with the contact form $\alpha$ defined as the restriction to $M$ of the Liouville 1-form $\lambda= p\, dq$. Let $Z$ be the associated Reeb vector field.
Identifying the tangent and cotangent bundles of $X$ with the metric $h$, the set $M$ is viewed as the unit tangent bundle of $X$. Then $Z$ is identified with the vector field on the unit tangent bundle of $X$ generating the geodesic flow on $S^\star X$. Therefore, with this identification, the Reeb flow is the geodesic flow on $M$.

This geodesic flow is ergodic for instance if the curvature of $X$ is negative. 

\item Hamiltonian flows:
Let $(W,\omega)$ be a symplectic manifold of dimension $4$, and let $M$ be a submanifold of $W$ of dimension $3$, such that there exists a vector field $v$ on a neighborhood of $M$ in $W$, satisfying $\mathcal{L}_v\omega=\omega$ (Liouville vector field), and transverse to $M$. Then the one-form $\alpha=\iota_v\omega$ is a global contact form on $M$, and we have $d\alpha=\omega$. Note that, if $\omega=d\lambda$ is exact, then the vector field $v$ defined by $\iota_v\omega=\lambda$ is Liouville (in local symplectic coordinates $(q,p)$ on $W$, we have $v=p\,\partial_p$). If the manifold $M$ is moreover a level set of an Hamiltonian function $h$ on $W$, then the Reeb flow on $M$ (associated with $\alpha$) is a reparametrization of the Hamiltonian flow restricted to $M$.

If $D=\ker\alpha$ is moreover endowed with a Riemannian metric $g$,
then $\alpha_g = h \alpha$ for some smooth function $h$ (never vanishing). 
Let us then choose the metric $g$ such that $h=1$. 
Then, the Reeb flow is ergodic on $(M,\nu)$ if and only if the Hamiltonian flow is ergodic on $(W,\omega^2)$.
\end{itemize}
\end{remark}

\paragraph{Quantum limits in the 3D contact case.}
Let $(\psi_j)_{j\in\N^*}$ be an arbitrary orthonormal family of $L^2(M,\mu)$. We set $\mu_j(a) = \langle \Op(a)\psi_j,\psi_j\rangle$, for every $j\in\N^*$, and for every classical symbol $a$ of order $0$. The measure $\mu_j$ is asymptotically positive, and any closure point (weak limit) of $(\mu_j)_{j\in\N^*}$ is a probability measure on the sphere bundle $S^\star M$, called a \emph{quantum limit} (QL), or a \emph{semi-classical measure},  associated with the family $(\psi_j)_{j\in\N^*}$.

Theorem \ref{thm1} says that, under the ergodicity assumption of the Reeb flow, the probability Popp measure $\nu$, which is invariant under the Reeb flow, is the ``main" quantum limit associated with any eigenbasis.

The following result provides an insight on quantum limits of eigenfunctions in the 3D contact case in greater generality, without any ergodicity assumption.
In order to state it, we identify $S^\star M = (T^\star M\setminus\{0\})/(0,+\infty)$ with the union of the unit cotangent bundle $U^\star M =\{ g^\star =1 \}$ and of the sphere bundle $S\Sigma = (\Sigma\setminus\{0\})/(0,+\infty)$ which is a two-fold covering of $M$.
Each fiber is obtained by compactifying a cylinder with two points at infinity.
Moreover, the Reeb flow can be lifted to $S\Sigma $.

\begin{theorem}\label{thm2}
Let $(\phi_n)_{n\in \N^*}$ be an orthonormal Hilbert basis of $L^2(M,\mu)$, consisting of eigenfunctions of $\triangle_{sR}$ associated with the eigenvalues $(\lambda_n)_{n\in\N^*}$.
\begin{enumerate}
\item Let $\beta$ be a QL associated with the family $(\phi_n)_{n\in\N^*}$. Using the above identification $S^\star M = U^\star M \cup S\Sigma$, the measure $\beta$ can be identified to the sum $\beta =\beta_0 + \beta _\infty $ of two mutually singular measures such that:
\begin{itemize}
\item $\beta_0$ is supported on $U^\star M$ and is invariant under the sR geodesic flow associated with the sR metric $g$,
\item $\beta_\infty$ is supported on $S\Sigma $ and is invariant under the lift of the Reeb flow.
\end{itemize} 
\item There exists a density-one sequence $(n_j)_{j\in\N^*}$ of positive integers such that, if $\beta$ is a QL associated with the orthonormal family $(\phi_{n_j})_{j\in \N^*}$, then the support of $\beta$ is contained in $S\Sigma$, i.e., $\beta _0=0$ in the previous decomposition.
\end{enumerate}
\end{theorem}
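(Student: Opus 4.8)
The plan is to treat the two statements separately, isolating the elliptic region $U^\star M = \{g^\star = 1\}$, where standard microlocal analysis applies, from a neighborhood of the characteristic manifold $\Sigma$, where $\triangle_{sR}$ degenerates. The decomposition itself requires no work: since $S^\star M = U^\star M \sqcup S\Sigma$ is a disjoint union (each fiber being the cylinder $\{g^\star=1\}$ capped by the two points of $S\Sigma$), any probability measure $\beta$ on $S^\star M$ splits as $\beta = \beta|_{U^\star M} + \beta|_{S\Sigma} =: \beta_0 + \beta_\infty$, and these are mutually singular because their supports are disjoint. To obtain invariance of $\beta_0$, I would run the classical commutator argument on the elliptic region. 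For $A = \Op(a)$ of order $0$ with $a$ supported away from $\Sigma$, the eigenfunction relation gives $\langle[-\triangle_{sR},A]\phi_n,\phi_n\rangle = 0$. On the support of $a$ the symbol $\sqrt{g^\star}$ is smooth and $1$-homogeneous, so $\sqrt{-\triangle_{sR}}$ is microlocally a genuine first-order operator there; the commutator with $\sqrt{-\triangle_{sR}}$ has principal symbol $\frac{1}{i}\{\sqrt{g^\star},a\} = \frac{1}{i}H_{\sqrt{g^\star}}a$, and passing to the limit along a QL subsequence yields $\int H_{\sqrt{g^\star}}a\, d\beta_0 = 0$. Since $H_{\sqrt{g^\star}}$ generates the sR geodesic flow on $\{g^\star=1\}$, this is the claimed invariance.

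The invariance of $\beta_\infty$ is where the difficulty lies, and where the contact structure is genuinely used. The naive renormalization fails: on $\Sigma$ one has $g^\star=0$ while $h_Z \neq 0$, so the candidate generator $h_Z/\sqrt{g^\star}$ is singular precisely on the support of $\beta_\infty$. The key structural facts are that $H_{g^\star} = 2h_X H_{h_X} + 2h_Y H_{h_Y}$ vanishes on $\Sigma$, and that the Reeb field satisfies $[Z,X],[Z,Y]\in D$, whence the principal symbol $\{h_Z,g^\star\}$ of $[Z,-\triangle_{sR}]$ vanishes to second order on $\Sigma$: microlocally near $\Sigma$, $Z$ almost commutes with $\triangle_{sR}$. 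To exploit this I would put $\triangle_{sR}$ into a microlocal normal form near $\Sigma$: after Fourier-transforming in the Reeb direction, $-\triangle_{sR}$ becomes, to leading order, a harmonic oscillator $-\partial_u^2 + h_Z^2 u^2$ in the transverse variable, with spectrum the ladder $(2k+1)|h_Z|$. The dynamics then splits over Landau levels, and on each level the effective first-order operator has symbol $|h_Z|$, whose Hamiltonian flow is exactly the lift of the Reeb flow to the two sheets $\{s>0\},\{s<0\}$ of $S\Sigma$. An Egorov theorem in this normal form, combined again with $\langle[-\triangle_{sR},A]\phi_n,\phi_n\rangle=0$ for $a$ supported near $\Sigma$, then gives $\int H_{|h_Z|}a\, d\beta_\infty = 0$, i.e. invariance under the lifted Reeb flow. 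Establishing this normal form and the accompanying propagation estimate uniformly as one approaches $\Sigma$ is the main obstacle, since it requires a second, anisotropic microlocalization adapted to the scaling $h_X,h_Y\sim|h_Z|^{1/2}$.

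For the second statement I would appeal to the local Weyl law for the sub-Laplacian. The counting function satisfies $N(\lambda)\sim c\,\lambda^{2}$ (homogeneous dimension $Q=4$), whereas any region of phase space on which $\triangle_{sR}$ is elliptic contributes only at the Riemannian order $\lambda^{3/2}$. Hence for $A=\Op(a)$ with $a$ supported in $U^\star M$ one has $\frac{1}{N(\lambda)}\sum_{\lambda_n\le\lambda}\langle A\phi_n,\phi_n\rangle\to 0$, so the Weyl measure charges only $S\Sigma$. Because these averaged quantities are asymptotically nonnegative for $a\ge 0$, a Chebyshev inequality produces, for each such $a$, a density-one set of indices along which $\langle A\phi_n,\phi_n\rangle\to0$; a diagonal extraction over a countable family of symbols dense among those supported in $U^\star M$ then yields a single density-one sequence $(n_j)$ along which $\beta_0=0$ for every associated QL, which is the assertion.
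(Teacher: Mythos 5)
Your strategy coincides with the paper's on all three fronts: the decomposition of $\beta$ is the tautological splitting of a measure over the disjoint union $S^\star M = U^\star M \cup S\Sigma$; the invariance of $\beta_0$ comes from the standard elliptic commutator argument away from $\Sigma$; the invariance of $\beta_\infty$ is to come from a harmonic-oscillator normal form near $\Sigma$ (your ``Fourier transform in the Reeb direction'' picture is exactly Theorem \ref{thm_quantum_normal_form}: $-\triangle_{sR}=R\Omega+V_0+\mathrm{O}_\Sigma(\infty)$ with $\sigma_P(R)=\vert h_Z\vert+\mathrm{O}_\Sigma(2)$ and $[R,\Omega]=0$); and statement 2 follows from the elliptic bound $\mathrm{O}(\lambda^{3/2})$ against $N(\lambda)\sim C\lambda^2$, plus Koopman--von Neumann and a diagonal extraction, as in Sections \ref{sec31}--\ref{sec32}. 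Your treatment of $\beta_0$ and of statement 2 is correct as it stands.

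There is, however, a genuine gap in the concluding mechanism you propose for $\beta_\infty$. The identity $\langle[-\triangle_{sR},A]\phi_n,\phi_n\rangle=0$, even granted the normal form and an Egorov theorem, cannot produce $\int H_{\vert h_Z\vert}a\,d\beta_\infty=0$. The principal symbol of $[-\triangle_{sR},A]$ is $\frac{1}{i}\{g^\star,a\}=\frac{1}{i}H_{g^\star}a$, which --- as you observe yourself --- vanishes on $\Sigma$, since $H_{g^\star}=2h_XH_{h_X}+2h_YH_{h_Y}$. To convert this order-one identity into a statement about the quantum limit (which is tested against order-zero symbols), you must renormalize by an elliptic first-order operator; near $\Sigma$ the only admissible one has symbol comparable to $\vert h_Z\vert$ (the function $\sqrt{g^\star}$ is not smooth there), and the resulting order-zero symbol $\{g^\star,a\}/\vert h_Z\vert$ still restricts to $0$ on $S\Sigma$. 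So in the limit the identity reads $0=0$ against $\beta_\infty$ and only reproduces the invariance of $\beta_0$. This is not a technicality: the Reeb field is generated by the bracket $[X,Y]$, i.e.\ it appears at \emph{second} order in the symbolic calculus, which is precisely why no first-order commutator argument with $-\triangle_{sR}$ itself can detect it.

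What is missing is the two-step argument of Section 3.4 and \cite{CHT-I}. First, \emph{prepare} $A$: replace it by $\bar A=\frac{1}{2\pi}\int_0^{2\pi}e^{is\Omega}Ae^{-is\Omega}\,ds$, which leaves the symbol on $\Sigma$ unchanged (because $\sigma_P(\Omega)_{\vert\Sigma}=0$) and achieves $[\bar A,\Omega]=0$; this is what makes your phrase ``the dynamics splits over Landau levels'' operative, since only a level-preserving operator can be analyzed level by level. Second, run the commutator argument with the first-order factor $R$ alone, setting $A_t=e^{-itR}\bar Ae^{itR}$: the needed vanishing $\langle[A_t,R]\phi_n,\phi_n\rangle=0$ is \emph{not} an eigenfunction identity, because $\phi_n$ is not an eigenfunction of $R$; it follows from the factorization, by inserting $\phi_n=\frac{1}{\lambda_n}R\Omega\phi_n$ and using $[R,\Omega]=0$ and $[A_t,\Omega]=0$, which gives $\langle[A_t,R]\phi_n,\phi_n\rangle=\frac{1}{\lambda_n}\langle[RA_tR,\Omega]\phi_n,\phi_n\rangle=0$. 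Equivalently: the components of $\phi_n$ on the eigenspaces of $\Omega$ \emph{are} eigenfunctions of $R$ (with eigenvalues $\lambda_n/(2k+1)$), and the prepared operators do not mix levels. Only after these two steps does Egorov for $e^{itR}$ yield the invariance of $\beta_\infty$ under the lifted Reeb flow (in the non-flat case one must also control the $V_0$ and $\mathrm{O}_\Sigma(\infty)$ remainders). You rightly flag the construction of the normal form as the main analytic obstacle --- the paper likewise defers it to \cite{CHT-I} --- but the gap identified above lies inside the part of the argument you do claim to carry out.
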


The proof of that result follows from arguments used to prove Theorem \ref{thm1} (see \cite{CHT-I} for details).

\subsection{The model example: compact Heisenberg flat case}
The simplest example is given by an invariant metric on a compact quotient of the Heisenberg group. The spectral decomposition of the Heisenberg Laplacian is then explicit and can serve as a model to derive our main result.

Let $G$ be the three-dimensional Heisenberg group defined as $G=\R^3$ with the product rule
$$
(x,y,z)\star (x',y',z')=(x+x',y+y',z+z'-xy') .
$$
The contact form $\alpha_H = dz+x\,dy $ and the vector fields $X_H= \partial_x$ and $Y_H= \partial_y - x\partial_z $ are left-invariant on $G$.
Defining the discrete co-compact subgroup $\Gamma$ of $G$ by $\Gamma =\{ (x,y,z)\in G \ \mid\ x,y\in \sqrt{2\pi}\Z,\, z\in 2\pi  \Z \}$, we then define the three-dimensional compact manifold manifold $M_H = \Gamma \backslash  G$, and we consider the horizontal distribution $D_H=\ker \alpha_H$, endowed with the metric $g_H$ such that $(X_H,Y_H)$ is a $g_H$-orthonormal frame of $D_H$. With this choice, we have ${\alpha_H}_g=\alpha_H$.

The Reeb vector field is given by $Z_H=-[X_H,Y_H]=\partial_z $.
The Lebesgue volume $d\mu = dx\, dy\, dz$ coincides with the Popp volume $dP$, and we consider the corresponding sub-Riemannian Laplacian
 $\triangle_H=X_H^2+Y_H^2$ (here, the vector fields have divergence zero).

We refer to this sub-Riemannian case as the \textit{Heisenberg flat case} $(M_H,\triangle_H)$.
The cotangent space $T^\star M_H$ is endowed with its canonical symplectic form.

\medskip

It is proved in \cite{yCdV-83} that the spectrum of $-\triangle_H$ is given by
$$
%\textrm{Spectrum }(-\triangle_{sR}) = 
\{\lambda _{\ell,m}= (2\ell+1)|m| \mid m\in \Z\setminus \{0\},\, \ell\in\N \}
\cup \{\mu_{j,k}= {2\pi }(j^2 + k^2) \mid j,k\in\Z  \} ,
$$
where $ \lambda _{\ell,m} $ is of multiplicity $|m| $. %It is parametrized with two quantum numbers $m\in \Z$ and $\ell\in \N $.
From this, we infer two things.

The first (already known, see, e.g., \cite{Metivier1976}) is that the spectral counting function has the asymptotics
\begin{equation*} %\label{equ:weyl-flat}
N(\lambda )\sim \sum_{l=0}^\infty  \frac{\lambda^2}{(2l+1)^2} = \frac{\pi^2}{8}\lambda^2 = \frac{P(M_H)}{32}\lambda^2,
\end{equation*} 
as $\lambda\rightarrow+\infty$.
It is interesting to compare that result with the corresponding result for a Riemannian Laplacian on a three-dimensional closed Riemannian manifold, for which $N(\lambda )\sim C\lambda^{3/2}$.

The second thing is the following.
Microlocally, in the cone $C_c=\{ p_z^2 > c (p_x^2 +p_y^2) \}$ for some $c>0$ (with local coordinates $p=(p_x,p_y,p_z)$ in the cotangent space), the sub-Riemannian Laplacian can be written as a commuting product
$$
\boxed{-\triangle_H = R_H\Omega_H = \Omega_H R_H},
$$
with
\begin{itemize}
\item $R_H=\sqrt{Z_H^\star Z_H}$ (pseudo-differential operator of order $1$),
\item $\Omega_H =U_H^2+ V_H^2$ (harmonic oscillator), where $U_H$ and $V_H$ are the pseudo-differential operators of order $1/2$ defined by 
$$
U_H = \frac{1}{i}R_H^{-\frac{1}{2}}X_H = \OpW\left( \frac{h_{X_H}}{\sqrt{\vert h_{Z_H}\vert}} \right), \qquad
V_H = \frac{1}{i}R_H^{-\frac{1}{2}}Y_H = \OpW\left( \frac{h_{Y_H}}{\sqrt{\vert h_{Z_H}\vert}} \right),
$$
where the notation $\OpW$ stands for the Weyl quantization.
Note that $[U_H,V_H]= \pm\mathrm{id}$ (according to the sign of $h_{Z_H}$) and $\exp(2i\pi\Omega_H)=\mathrm{id}$.
\end{itemize}
In terms of symbols, since $X_H$ and $Y_H$ have divergence zero, we have factorized $\sigma(-\triangle_H) = h_{X_H}^2+h_{Y_H}^2$ (full symbol) as
$
\vert h_{Z_H} \vert \left( \left( \frac{h_{X_H}}{\sqrt{\vert h_{Z_H}\vert }} \right)^2 + \left( \frac{h_{Y_H}}{\sqrt{\vert h_{Z_H}\vert }} \right)^2 \right) .
$

\section{The main steps of the proof of Theorem \ref{theo:main}}\label{sec3}
\subsection{General concepts}\label{sec31}
We fix an arbitrary (orthonormal) eigenbasis $(\phi_n)_{n\in\N^*}$ of $\triangle_{sR}$. We recall that the spectral counting function is defined by $N(\lambda)=\# \{n \mid \lambda_n \leq \lambda \}$.

\begin{definition}
For every bounded linear operator $A$ on $L^2(M,\mu)$, we define the Ces\`aro mean $E(A)\in \C$ by
\begin{equation*}%\label{defE}
E(A) = \lim_{\lambda \rightarrow +\infty} \frac{1}{N(\lambda)}\sum_{\lambda_n \leq \lambda } \left\langle A \phi_n , \phi_n \right\rangle,
\end{equation*}
whenever this limit exists.
We define the variance $V(A)\in \R^+$ by
\begin{equation*}%\label{defV}
V(A) = \limsup _{\lambda \rightarrow +\infty } \frac{1}{N(\lambda)}\sum _{\lambda_n \leq \lambda} \left\vert\left\langle A \phi_n ,\phi_n \right\rangle \right\vert^2 .
\end{equation*}
\end{definition}

It is easy to see that $V(A)\leq E(A^\star A)$, and that, if $A$ is a compact operator on $L^2(M,\mu)$, then $E(A)=0$ and $V(A)=0$.

\begin{definition}\label{defi:weyl-measure}
The \emph{local Weyl measure} $w_\triangle $ is the probability measure on $M$ defined by
$$
\int _M f \, dw_\triangle = \lim _{\lambda \rightarrow +\infty }\frac{1}{N(\lambda)}\sum _{\lambda_n \leq \lambda } \int_M f |\phi_n |^2 \, d\mu  ,
$$
for every continuous function $f:M\rightarrow \R $, whenever the limit exists.
In other words, 
$$
w_\triangle = \mathrm{weak}\lim_{\lambda \rightarrow +\infty} \frac{1}{N(\lambda)}\sum _{\lambda_n \leq \lambda } |\phi_n |^2 \mu .
$$
The \emph{microlocal  Weyl measure} $W_\triangle $ is the probability measure on $S^\star M=S(T^\star M)$, the co-sphere bundle, defined as follows:
we identify positively homogeneous functions of degree $0$ on $T^\star M$ with functions on the bundle $ S^\star M$. Then, for every symbol $a:S^\star M \rightarrow \R $ of order zero, we have
$$
\int _{S^\star M} a \, dW_\triangle = \lim _{\lambda \rightarrow +\infty }\frac{1}{N(\lambda)}\sum _{\lambda_n \leq \lambda } \langle \Op_+ (a)\phi_n , \phi_n \rangle   ,
$$
where $\Op_+$ is a positive quantization, whenever the limit exists.
In other words, $W_\triangle $ is the weak limit of the probability measures on $S^\star M $ defined by 
$$
a \mapsto \frac{1}{N(\lambda)}\sum _{\lambda_n \leq \lambda } \langle \Op_+ (a)\phi_n , \phi_n \rangle .
$$
\end{definition}

The microlocal Weyl measure, if it exists, does not depend on the choice of the quantization and of the choice of the orthonormal eigenbasis (because it is a trace). Moreover, $W_\triangle $ is even with respect to the canonical involution of $S^\star M$.

\paragraph{General path towards QE.}
In order to establish QE, we follow the general path of proof described in \cite{Ze-10}. Defining the spectral counting function by $N(\lambda)=\# \{n \mid \lambda_n \leq \lambda \}$, the first step consists in establishing a \textit{microlocal Weyl law}:
\begin{equation}\label{localWeyllaw}
E(A) = \int _{ST^\star M } a\, dW_\triangle,
\end{equation}
for every classical pseudo-differential operator $A$ of order $0$ with a principal symbol $a$, where $S T^\star M$ is the unit sphere bundle of $T^\star M$.
This Ces\'aro convergence property can usually be established under weak assumptions, without any ergodicity property.

The second step consists in proving the \textit{variance estimate}
\begin{equation}\label{varianceestimate}
V (A- E(A) \mathrm{id} ) =0 .
\end{equation}
The variance estimate usually follows by combining the microlocal Weyl law \eqref{localWeyllaw} with ergodicity properties of some
 associated classical dynamics and with an Egorov theorem.

Then QE follows from the two properties above. Indeed, for a fixed pseudo-differential operator $A$ of order $0$, it follows from
 \eqref{varianceestimate} and from a well known lemma\footnote{This lemma states that, given a bounded sequence $(u_n)_{n\in\N}$ of nonnegative
 real numbers, the Ces\'aro mean $\frac{1}{n}\sum_{k=0}^{n-1}u_k$ converges to $0$ if and only if there exists a subset $S\subset\N$ of density
 one such that $(u_k)_{k\in S}$ converges to $0$. We recall that $S$ is of density one if $\frac{1}{n}\#\{k\in S\mid k\leq n-1\}$ converges to $1$
 as $n$ tends to $+\infty$.} due to Koopman and Von Neumann (see, e.g., \cite[Chapter 2.6, Lemma 6.2]{Petersen}) that there exists a density-one
 sequence $(n_j)_{j\in\N^*}$ of positive integers such that $\left\langle A \phi_{n_j}, \phi_{n_j} \right\rangle \rightarrow E(A)$ as $j\rightarrow +\infty$.
Using the fact that the space of symbols of order $0$ admits a countable dense subset, QE is then established with a diagonal argument.

\subsection{The microlocal Weyl law}\label{sec32}
In order to prove Theorem \ref{theo:main}, we follow the general path above, and we start by providing a microlocal Weyl law, identifying the microlocal Weyl measure in the 3D contact case.

\begin{theorem}\label{theo:weyl}
Let $A$ be a classical pseudo-differential operator of order $0$ with principal symbol $a$. We have
\begin{equation*}%\label{eq_trace}
\sum_{\lambda_n \leq \lambda } \left\langle A \phi_n , \phi_n \right\rangle =
 \frac{P(M)}{64} \lambda^2 (1+\mathrm{o}(1)) \int_M \big( a(q,\alpha_g (q))  +  a(q,-\alpha_g (q)) \big) \, d\nu  ,
\end{equation*}
as $\lambda\rightarrow+\infty$.
In particular, it follows that $N(\lambda) \sim \frac{P(M)}{32}\lambda^2$,
and that %$E(A)$ only depends on the principal symbol $a\in \mathcal{S}^0$ of $A$ in a linear and positive way, and
\begin{equation*}%\label{localweylformula}
E(A) = \frac{1}{2}  \int_M \big( a(q,\alpha_g (q)) + a(q,-\alpha_g (q)) \big) \, d\nu = \int_{S\Sigma} a \, dW_\triangle ,
\end{equation*}
where $S\Sigma = (\Sigma\setminus\{0\})/(0,+\infty)$ is the co-sphere bundle of $\Sigma$.
\end{theorem}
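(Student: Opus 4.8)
The plan is to reduce the problem, microlocally near the characteristic cone $\Sigma=D^\perp$, to the explicitly solvable Heisenberg model $-\triangle_H=R_H\Omega_H$, and then to count eigenvalues level by level in the transverse harmonic oscillator. I begin by localizing near $\Sigma$: away from $\Sigma$ the operator $-\triangle_{sR}$ is elliptic of order $2$ on the $3$-manifold $M$, so any piece of $A$ microsupported away from $\Sigma$ contributes to the left-hand side a quantity of size $O(\lambda^{3/2})=o(\lambda^2)$ by the ordinary elliptic Weyl law. Hence the entire $\lambda^2$ asymptotics is carried by a conic neighborhood of $\Sigma$, and I may replace $a$ by $\chi a$ with $\chi$ a microlocal cutoff equal to $1$ near $\Sigma$.

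The heart of the argument is the construction of a microlocal normal form near $\Sigma$. Using that $d\alpha_g$ restricts to a symplectic form on $D$ and, crucially, that the sub-principal symbol of $-\triangle_{sR}$ vanishes, I would conjugate $-\triangle_{sR}$ by a microlocally unitary Fourier integral operator to the Heisenberg-type product $R\,\Omega$, where $R$ is a positive elliptic pseudo-differential operator of order $1$ with principal symbol $|h_Z|$, and $\Omega$ is, in the transverse directions, a harmonic oscillator $U^2+V^2$ with $[U,V]=\pm i\,\mathrm{id}$, hence with spectrum $\{2\ell+1:\ell\in\N\}$, commuting with $R$ modulo lower-order errors. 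The vanishing of the sub-principal symbol is exactly what guarantees that the model oscillator appears with the exact eigenvalues $2\ell+1$ and no shift; this is the feature that ties the limit measure to the canonical (Popp) normalization.

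Granting this normal form, I decompose the spectral projector $\mathbf 1_{[0,\lambda]}(-\triangle_{sR})$ over the Landau levels of $\Omega$. Writing $\Pi_\ell$ for the (order $0$) projector onto the $\ell$-th level, $-\triangle_{sR}$ acts as $(2\ell+1)R$ on its range, so that
\[
\sum_{\lambda_n\leq\lambda}\langle A\phi_n,\phi_n\rangle \;=\; \sum_{\ell\geq0}\mathrm{Tr}\Big(A\,\Pi_\ell\,\mathbf 1_{[0,\lambda/(2\ell+1)]}(R)\Big)+o(\lambda^2).
\]
Each summand is an ordinary Weyl law for the order-$1$ operator $R$ restricted to level $\ell$: the Landau degeneracy furnishes a density of states proportional to $|h_Z|$ in the transverse symplectic plane, and after integration this combines with the one-form direction $\alpha_g$ and the base to reproduce exactly the Popp volume $|\alpha_g\wedge d\alpha_g|$ on $\Sigma$. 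Since $a$ is homogeneous of degree $0$, its restriction to $\Sigma$ takes only the two values $a(q,\pm\alpha_g(q))$ according to the sign of $h_Z$, and the two sheets of $\Sigma$ yield the symmetric average. Summing the contributions weighted by $(2\ell+1)^{-2}$ and using $\sum_{\ell\geq0}(2\ell+1)^{-2}=\pi^2/8$ produces the stated constant and the normalized Popp measure $\nu$; specializing to $a\equiv1$ recovers $N(\lambda)\sim\frac{P(M)}{32}\lambda^2$, which pins down all constants. The sharp remainder $(1+o(1))$ is then obtained either by counting directly with the normal form or by feeding a smoothed trace into a Karamata-type Tauberian theorem.

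I expect the main obstacle to be the normal-form step together with the control of its errors uniformly in $\ell$. The off-diagonal and lower-order terms produced by the conjugation must be shown not to contaminate the leading coefficient, and although the dominant contribution comes from bounded $\ell$ (the series $\sum(2\ell+1)^{-2}$ converges), one must nonetheless bound the tail $\ell\sim\lambda$, where the normal-form approximation degrades; throughout, the Heisenberg flat case $(M_H,\triangle_H)$ serves as the exact local model against which these errors are measured.
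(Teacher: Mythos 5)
Your route is genuinely different from the paper's, and the two should be compared before discussing its gaps. The paper does \emph{not} use any normal form to prove Theorem~\ref{theo:weyl}: it first establishes the \emph{local} Weyl law by computing small-time asymptotics of heat traces $\mathrm{Tr}(f e^{t\triangle_{sR}})$ with the sR heat kernel and applying the Karamata Tauberian theorem, and then upgrades local to microlocal by two observations. First, outside $\Sigma$ the operator is elliptic, so FIO/wave-propagation arguments of H\"ormander and Duistermaat--Guillemin give $\sum_{\lambda_n\leq\lambda}\vert\langle A\phi_n,\phi_n\rangle\vert=\mathrm{O}(\lambda^{3/2})$ for symbols vanishing near $\Sigma$ (this coincides with your first step). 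Second, since $D$ has codimension one, $S\Sigma\rightarrow M$ is a double cover, so once $w_\triangle$ is known, the microlocal Weyl measure is forced: it is half the pullback of $w_\triangle$ under this covering. No level decomposition is needed; the Birkhoff/quantum normal forms (Theorems~\ref{thm:BNF} and~\ref{thm_quantum_normal_form}) are reserved for the Egorov/variance step of the QE theorem. Your Landau-level counting would, if completed, explain the constant structurally as $\sum_{\ell\geq 0}(2\ell+1)^{-2}$ and could yield finer information, but it is by far the more expensive path, and your arithmetic (the weights $(2\ell+1)^{-2}$, the density of states proportional to $\vert h_Z\vert$, the symmetric average over $\Sigma^\pm$) only confirms consistency with the flat Heisenberg computation already displayed in the paper.

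The genuine gaps are the three points you partly flag but do not resolve, and they constitute essentially all of the work in your approach. (a) The projectors $\Pi_\ell$ are not classical pseudo-differential operators (their ``symbols'' concentrate on $\Sigma$), and $R$ restricted to the range of $\Pi_\ell$ is not an elliptic operator on a manifold; so ``ordinary Weyl law for $R$ on level $\ell$'' is not a citable fact. The per-level asymptotics $\mathrm{Tr}\big(A\,\Pi_\ell\,\mathbf 1_{[0,\mu]}(R)\big)\sim c_\ell(a)\,\mu^2$ is exactly what must be proven, presumably by reduction to the exact Fourier-analytic computation on $M_H$ together with error control. (b) The quantum normal form is only microlocal: the operators $R$ and $\Omega$ exist in a conic neighborhood of $\Sigma_q$ and depend on that chart, so the $\Pi_\ell$ are not globally defined and your identity decomposing $\mathbf 1_{[0,\lambda]}(-\triangle_{sR})$ over levels is not meaningful as written; one must patch charts and show the patching errors are $\mathrm{o}(\lambda^2)$ in trace norm against $A$. (c) Uniformity in $\ell$: level-$\ell$ states of energy at most $\lambda$ occupy the region $h_X^2+h_Y^2\approx(2\ell+1)\vert h_Z\vert$, which exits any fixed conic neighborhood of $\Sigma$ once $\ell\gtrsim\sqrt{\lambda}$; there the normal form is void and these states must be recounted as part of the elliptic region. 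Stitching the two counts at the interface $\ell\sim\sqrt{\lambda}$ without gaps or double counting is precisely the hard part, and it is what the heat-trace/Karamata argument is designed to avoid, since heat-trace locality makes no reference to any phase-space decomposition. Until (a)--(c) are supplied, your argument is a (correct and illuminating) heuristic rather than a proof.
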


Our proof of Theorem \ref{theo:weyl}, done in \cite{CHT-I}, consists in first establishing a local Weyl law, by computing heat traces with the sR heat kernel and using the Karamata tauberian theorem, and then in infering from that local law, the microlocal one. The latter step is performed as follows.
We first prove that, if the microlocal Weyl measure $W_\triangle $ exists, then $\mathrm{Supp}(W_\triangle)\subset S \Sigma $. This follows from the fact that, outside of the characteristic manifold $\Sigma$, the operator $\triangle_{sR}$ is elliptic, and therefore classical arguments of \cite{DG-75,Ho-68}, using Fourier Integral Operators and wave propagation, yield that
$$
\sum _{\lambda_n \leq \lambda }| \langle A \phi_n ,\phi_n \rangle | = \mathrm{O}\big( \lambda^{3/2} \big),
$$
as $\lambda\rightarrow+\infty$, for every pseudo-differential operator of order $0$ whose principal symbol vanishes in a neighborhood of $\Sigma$. Since we already know that $N(\lambda) \sim C \lambda^2$, the concentration on $\Sigma$ follows.
It can be noted that these arguments are as well valid for general sub-Riemannian structures, not only in the 3D contact case.

Then, we prove that, since the horizontal distribution $D$ is of codimension $1$ in $TM$, and since we have already identified the local Weyl measure $w_\triangle$, then the microlocal Weyl measure $W_\triangle$ exists and is equal to half of the pullback of $w_\triangle$ by the double covering $S \Sigma \rightarrow M $ which is the restriction of the canonical projection of $T^\star M $ onto $M$.
This latter argument is also valid in a more general context, as soon as $D$ is of codimension $1$.

\begin{remark}\label{rk:changemu1}
A remark which is useful in order to understand why our result does not depend on the choice of the measure is the following. Let us consider two sR Laplacians $\triangle_{\mu_1}$ and $\triangle_{\mu_2}$ associated with two different volume forms (but with the same metric $g$).
We assume that $\mu_2= h^2\mu_1$ with $h$ a positive smooth function on $M$.
We define the isometric bijection $J:L^2(M,\mu_2)\rightarrow L^2(M,\mu_1)$ by $J\phi=h\phi$. Then $
J\triangle_{\mu_2}J^{-1} = \triangle_{\mu_1} + h \triangle_{\mu_2}(h^{-1})  \,\mathrm{id} $, and therefore $\triangle_{\mu_1}$ is unitarily equivalent to $\triangle_{\mu_2} +W$, where $W$ is a bounded operator.
\end{remark}

\subsection{Birkhoff normal form}
Having in mind the model situation given by the Heisenberg flat case $(M_H,\triangle_H)$, in the general 3D contact case we are able to establish a Birkhoff normal form, in the spirit of a result by Melrose in \cite[Section 2]{Me-84}, which implies in particular that, microlocally near the characteristic cone, all 3D contact sub-Riemannian Laplacians (associated with different metrics and/or measures) are equivalent. 

We define the positive conic submanifolds $\Sigma^\pm = \{ (q,s\alpha_g(q))\in T^\star M \mid \pm s>0 \}$ of $T^\star M$.
In the Heisenberg flat case, the characteristic cones are defined accordingly.

Given $k\in \N\cup \{+\infty\}$ and given a smooth function $f$ on $T^\star M$, the notation $f=\mathrm{O}_\Sigma(k)$
 means that $f$ vanishes along $\Sigma$ at order $k$ (at least). The word flat is used when $k=+\infty$.

\paragraph{Classical normal form.}

\begin{theorem}\label{thm:BNF}
Let $q\in M$ be arbitrary. There exist a conic neighborhood $C_q$ of $\Sigma^{+}_q$ in $(T^\star M,\omega)$ and a homogeneous symplectomorphism $\chi$ from $C_q$ to $(T^\star M_H,\omega_H)$, satisfying $\chi(q)=0$ and
 $\chi(\Sigma^{+}\cap C_q)\subset\Sigma_H^+$, such that 
\begin{equation*}
\boxed{\sigma_P(-\triangle_{H})\circ \chi =\sigma_P(-\triangle_{sR})+\mathrm{O}_\Sigma(\infty)}
\end{equation*}
\end{theorem}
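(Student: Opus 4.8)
The plan is to build $\chi$ as a composition of a ``soft'' symplectic identification that matches the two characteristic cones together with their transverse quadratic models, followed by a Birkhoff--Melrose iteration that matches the full principal symbols to infinite order along $\Sigma$. Throughout I use that $\Sigma=\{h_X=h_Y=0\}$ is a smooth conic \emph{symplectic} submanifold of $(T^\star M,\omega)$ of codimension $2$: its symplectic normal bundle is spanned by the Hamiltonian fields of $h_X,h_Y$, and the pairing $\{h_X,h_Y\}$ restricts to $\Sigma^+$ as $\pm h_Z$, which is nonzero there.

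\emph{Step 1 (matching the cones).} Restricting the tautological one-form to $\Sigma^+$ via the parametrization $(q,s)\mapsto(q,s\alpha_g(q))$ gives $\omega|_{\Sigma^+}=ds\wedge\alpha_g+s\,d\alpha_g$, and the analogous formula holds for $\omega_H|_{\Sigma_H^+}$. By the Darboux theorem for contact forms there is a local contactomorphism $\psi$ with $\psi(q)=0$ and $\psi^\star\alpha_H=\alpha_g$; its cotangent lift is a homogeneous symplectomorphism sending $\Sigma^+$ to $\Sigma_H^+$, and by the two displayed formulas it matches the restricted symplectic forms. A Darboux--Weinstein neighbourhood argument for symplectic submanifolds then extends it to a homogeneous symplectomorphism of a conic neighbourhood of $\Sigma^+$ onto one of $\Sigma_H^+$. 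This reduces the problem to comparing two symbols on $T^\star M_H$ that share the cone $\Sigma_H$ and the Reeb normalization.

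\emph{Step 2 (matching the transverse $2$-jet).} I would introduce the degree-$\tfrac12$ homogeneous functions $u=h_X/\sqrt{h_Z}$, $v=h_Y/\sqrt{h_Z}$, so that $\sigma_P(-\triangle_{sR})=h_Z\,(u^2+v^2)$, and likewise for the Heisenberg symbol. The normalization $d\alpha_g(X,Y)=1$ is exactly what forces $\{u,v\}=1$ on $\Sigma$, so $(u,v)$ are symplectic normal coordinates \emph{to leading order} and the transverse Hessians (the symplectic eigenvalue of the transverse harmonic oscillator) of the two symbols coincide along $\Sigma$. After a symplectic automorphism of the normal bundle the two symbols agree to second order along $\Sigma_H$; their difference is $\mathrm{O}_\Sigma(3)$.

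\emph{Step 3 (Birkhoff iteration) and the main obstacle.} Writing $p^{(2)}=h_Z(u^2+v^2)$ for the common model, I would remove the remaining discrepancy order by order: if $p-p_H=\mathrm{O}_\Sigma(N)$ with leading part $r_N$, I seek a homogeneous generating function $F_N=\mathrm{O}_\Sigma(N)$ (degree $1$ in the fibers, hence conic) whose time-one flow cancels $r_N$, which amounts to solving the homological equation $\{F_N,p^{(2)}\}=-r_N$; composing and Borel-summing the generating functions produces $\chi$. The hard part is the solvability of this equation: the transverse part of $p^{(2)}$ is a harmonic oscillator whose flow is \emph{periodic}, so $\{\,\cdot\,,p^{(2)}\}$ has a large kernel consisting of the \emph{resonant} symbols Poisson-commuting with the oscillator, i.e. the functions of the action $\Omega=u^2+v^2$ and of the $\Sigma$-variables $(q,s)$. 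The equation can only be solved modulo this resonant subspace, so the iteration reduces each symbol to a resonant normal form $\sum_b c_b(q,s)\,\Omega^b$, and one must show that the resonant invariants of $\sigma_P(-\triangle_{sR})$ coincide with those of $\sigma_P(-\triangle_H)$. This is the technical core, in the spirit of Melrose's contact normal form: it uses that both symbols are \emph{exactly} quadratic sums of two squares vanishing to order two on $\Sigma$, together with the homogeneity constraint ($p^{(2)}$ and $\Omega$ have fiber-degree $2$ while $h_Z$ has degree $1$) and the matched $2$-jet, to pin the resonant normal form down to the universal model $h_Z\,\Omega$, whence the two resonant forms agree and the iteration closes.
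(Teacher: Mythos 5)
Your Steps 1 and 2 are sound, and they reach the same intermediate stage as the paper --- a homogeneous symplectomorphism after which the two symbols agree up to $\mathrm{O}_\Sigma(3)$ --- only by the reverse route: the paper first matches the Hamiltonian \emph{exactly} via a homogeneous map $\chi_1$ into a model $(\R^6,\tilde\omega,H_2)$, accepting $\chi_1^\star\tilde\omega=\omega+\mathrm{O}_\Sigma(1)$, and then repairs the symplectic form by Darboux--Weinstein at the cost of an $\mathrm{O}_\Sigma(3)$ error in the Hamiltonian; you instead build an exact symplectomorphism (the cotangent lift of a Darboux contactomorphism, which already makes your appeal to Darboux--Weinstein redundant) and then repair the transverse $2$-jet. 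Your observation that the normalization $d\alpha_g(X,Y)=1$ is what forces the transverse symplectic eigenvalue to equal $\vert h_Z\vert$ on both sides is indeed the key point at that stage.

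The genuine gap is in Step 3, precisely at what you call the technical core. You correctly identify that the graded homological operator is $h_Z\,\mathrm{ad}_\Omega$ and that its cokernel at each even order consists of the resonant terms $c(q)\,h_Z^{2-b}\Omega^{b}$; but the reasons you offer for why these terms agree with the (vanishing) resonant terms of the Heisenberg model do not constitute a proof. Homogeneity excludes nothing: $c(q)\,h_Z^{2-b}\Omega^{b}$ is homogeneous of fiber-degree $2$ for every $b\geq 2$ (note in passing that $\Omega$ has degree $1$, not $2$, since $\sigma_P(\Omega)=(h_X^2+h_Y^2)/\vert h_Z\vert$). The matched $2$-jet controls nothing beyond order $3$. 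And ``both symbols are exactly sums of two squares vanishing to second order on $\Sigma$'' is circular, since every 3D contact sR symbol is of that form --- this is a restatement of the theorem, not an argument. What actually closes the induction, in \cite{Me-84} as well as in \cite{CHT-I}, is that in this local conic setting the resonant terms are not invariants at all: they are removed by \emph{transport along the Reeb flow}. Take as generator a resonant function $F=f(q)\,h_Z^{1-b}\Omega^{b}$ (homogeneous of degree $1$, lying in the kernel of the graded operator). A Leibniz computation gives
$$
\{h_Z\Omega,\,F\} = h_Z^{\,2-(b+1)}\,\Omega^{b+1}\,(Zf) + \cdots,
$$
where $Zf$ is the Reeb derivative of $f$ and the dots are non-resonant or of higher vanishing order. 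Hence the resonant coefficient at order $2(b+1)$ can be shifted by $Zf$ (up to lower-order linear terms in $f$), and killing it amounts to solving a transport equation $Zf=-c$, which is solvable near $q$ precisely because the Reeb vector field never vanishes. This is also why the statement is local --- a conic neighborhood of the single ray $\Sigma_q^+$ --- and why no global normal form should be expected: globally the transport equation is obstructed by the Reeb dynamics (e.g.\ by averages over closed orbits), consistently with the paper's conjecture that the periods of closed Reeb orbits are spectral invariants of $\triangle_{sR}$. Without this transport mechanism, your iteration stalls at the first resonant order.
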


Of course, a similar result can be given for $\Sigma^-$.

In order to establish this normal form, we first endow $\R^6$ with a symplectic form $\tilde\omega$,
 with an appropriate conic structure, and with an Hamiltonian function $H_2$, such that, for any given contact
 structure and any $q\in M$, there exists a homogeneous diffeomorphism $\chi_1$ from a conic neighborhood $C_q$ of $\Sigma_q^+$ to $\R^6$ such that $\chi_1^* \tilde{\omega}=\omega+\mathrm{O}_\Sigma(1)$ and $H_2\circ \chi_1  = \sigma_P(-\triangle_{sR})$. Thanks to the Darboux-Weinstein lemma, we modify $\chi_1$ into a homogeneous diffeomorphism $\chi_2$ such that $\chi_2^* \tilde{\omega}=\omega$ and $H_2\circ \chi_2 = \sigma_P(-\triangle_{sR})+\mathrm{O}_\Sigma(3)$.
Finally, we improve the latter remainder to a flat remainder $\mathrm{O}_\Sigma(\infty)$, by solving an infinite number of cohomological equations in the symplectic conic manifold $(\R^6,\tilde{\omega})$.

\paragraph{Quantum normal form.}
By quantizing the above Birkhoff normal form, we obtain the following result.

\begin{theorem}\label{thm_quantum_normal_form}
For every $q\in M$, there exists a (conic) microlocal neighborhood $\tilde{U}$ of $\Sigma_q$ in $T^\star M$ such that, considering all the following pseudo-differential operators as acting on functions microlocally supported in $\tilde{U}$, we have
\begin{equation*}%\label{quantum_normal_form}
\boxed{ -\triangle_{sR} = R\Omega + V_0+\mathrm{O}_\Sigma(\infty) }
\end{equation*}
where
\begin{itemize}
\item $V_0\in \Psi^0$ is a self-adjoint pseudo-differential operator of order $0$,
\item $R$ and $\Omega$ are self-adjoint pseudo-differential operators of order $1$, of respective principal symbols satisfying
$$
\sigma_P(R)=\vert h_Z\vert + \mathrm{O}_\Sigma(2),
\qquad
\sigma_P(\Omega)=(h_X^2+h_Y^2)/\vert h_Z\vert + \mathrm{O}_\Sigma(3),
$$
\item $[R,\Omega] = 0 \mod \Psi^{-\infty}$ ,
\item $\exp(2i\pi\Omega)=\mathrm{id} \mod \Psi^{-\infty}$.
\end{itemize}
\end{theorem}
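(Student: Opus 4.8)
The plan is to promote the classical Birkhoff normal form of Theorem~\ref{thm:BNF} to the operator level by transporting the explicit Heisenberg factorization through an Egorov-type conjugation. First I would quantize the homogeneous symplectomorphism $\chi$ furnished by Theorem~\ref{thm:BNF} into an elliptic Fourier integral operator $U$, microlocally unitary on a conic neighborhood of $\Sigma_q^{+}$, whose canonical relation is the graph of $\chi$. The H\"ormander--Duistermaat theory of Fourier integral operators produces such a $U$; the only delicate point is that $\Sigma$ is exactly where the relevant principal symbols degenerate, so one must work microlocally on the conic set $\tilde U\setminus 0$ and never invoke ellipticity of $\triangle_{sR}$ itself---only the non-degeneracy of the graph of $\chi$, which is what FIO quantization actually requires.

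Next, Egorov's theorem guarantees that the conjugation $A\mapsto UAU^{-1}$ preserves the class of pseudo-differential operators and transforms principal symbols by pullback along $\chi$. Applying this to $-\triangle_{sR}$ and using the classical normal form $\sigma_P(-\triangle_H)\circ\chi=\sigma_P(-\triangle_{sR})+\mathrm{O}_\Sigma(\infty)$, I obtain
\[
U(-\triangle_{sR})U^{-1}=-\triangle_H+V_0'+\mathrm{O}_\Sigma(\infty),
\]
where a priori $V_0'$ is of order $\leq 1$. The decisive input is that the sub-principal symbol of $-\triangle_{sR}$ vanishes (as recalled in the remark following H\"ormander's theorem), and so does that of $-\triangle_H=X_H^2+Y_H^2$; since conjugation by a properly normalized unitary FIO leaves the sub-principal symbol invariant under pullback, the order-$1$ part of $V_0'$ matches that of $-\triangle_H$ up to $\mathrm{O}_\Sigma(\infty)$ and may be absorbed into the flat remainder. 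Hence $V_0'\in\Psi^0$, and it is self-adjoint since $U$ is unitary and $-\triangle_{sR},-\triangle_H$ are self-adjoint.

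It then remains to pull the exact model factorization back to $M$. In the Heisenberg cone we have the commuting product $-\triangle_H=R_H\Omega_H=\Omega_H R_H$, with $R_H=\sqrt{Z_H^\star Z_H}$ and $\Omega_H=U_H^2+V_H^2$, satisfying $[R_H,\Omega_H]=0$ and $\exp(2i\pi\Omega_H)=\mathrm{id}$. Setting $R=U^{-1}R_H U$, $\Omega=U^{-1}\Omega_H U$ and $V_0=U^{-1}V_0'U$, every assertion follows by conjugation: $R\Omega=U^{-1}(-\triangle_H)U$ gives $-\triangle_{sR}=R\Omega+V_0+\mathrm{O}_\Sigma(\infty)$; the relations $[R,\Omega]=U^{-1}[R_H,\Omega_H]U=0$ and $\exp(2i\pi\Omega)=U^{-1}\exp(2i\pi\Omega_H)U=\mathrm{id}$ hold modulo $\Psi^{-\infty}$, the smoothing error reflecting that $U$ is only microlocally unitary. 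Self-adjointness of $R$ and $\Omega$ is inherited from $R_H,\Omega_H$, and their commuting makes $R\Omega$ (hence $V_0$) self-adjoint. Finally the principal symbols $\sigma_P(R)=|h_Z|+\mathrm{O}_\Sigma(2)$ and $\sigma_P(\Omega)=(h_X^2+h_Y^2)/|h_Z|+\mathrm{O}_\Sigma(3)$ are read off from $\sigma_P(R_H)=|h_{Z_H}|$ and $\sigma_P(\Omega_H)=(h_{X_H}^2+h_{Y_H}^2)/|h_{Z_H}|$ by pullback along $\chi$, using that $\chi$ sends $\Sigma^{+}$ to $\Sigma_H^{+}$ and matches these symbols to the indicated order.

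The step I expect to be the main obstacle is the careful bookkeeping of the infinite-order flatness $\mathrm{O}_\Sigma(\infty)$ through the FIO conjugation, together with the invariance of the (vanishing) sub-principal symbol. One must verify that Egorov's theorem not only transports principal symbols but propagates the flat vanishing along $\Sigma$, so that the discrepancy between $U(-\triangle_{sR})U^{-1}$ and the model genuinely collapses to an order-$0$ operator plus a flat remainder, with no surviving order-$1$ term. Constructing $U$ so as to respect both the degenerate conic geometry near $\Sigma$ and the normalization needed to preserve the sub-principal symbol is the technical heart of the argument; everything else is a formal consequence of conjugating the exact Heisenberg identities.
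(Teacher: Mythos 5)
Your overall strategy---quantizing the symplectomorphism $\chi$ of Theorem~\ref{thm:BNF} into a microlocally unitary Fourier integral operator $U$, invoking Egorov's theorem together with the vanishing of the sub-principal symbols of $-\triangle_{sR}$ and $-\triangle_H$ to reduce the conjugated operator to $-\triangle_H$ plus an order-$0$ term plus a flat remainder, and then transporting the exact Heisenberg relations $[R_H,\Omega_H]=0$ and $\exp(2i\pi\Omega_H)=\mathrm{id}$ by conjugation---is the same route the paper takes (``by quantizing the above Birkhoff normal form''), and those steps are essentially sound.

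The genuine gap is in your last step, where the principal symbol statements are ``read off by pullback along $\chi$, using that $\chi$ \ldots matches these symbols to the indicated order.'' Theorem~\ref{thm:BNF} asserts no such matching: it controls only the \emph{product}, $\sigma_P(-\triangle_H)\circ\chi=\sigma_P(-\triangle_{sR})+\mathrm{O}_\Sigma(\infty)$, i.e.\ the pullback of $\vert h_{Z_H}\vert\cdot\bigl((h_{X_H}^2+h_{Y_H}^2)/\vert h_{Z_H}\vert\bigr)$, and flat agreement of a product does not give agreement of its factors: a priori $\vert h_{Z_H}\vert\circ\chi-\vert h_Z\vert$ could contain a nonzero term linear in $(h_X,h_Y)$, compensated by an opposite error in the oscillator factor. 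So the conclusions $\sigma_P(R)=\vert h_Z\vert+\mathrm{O}_\Sigma(2)$ and $\sigma_P(\Omega)=(h_X^2+h_Y^2)/\vert h_Z\vert+\mathrm{O}_\Sigma(3)$ are asserted, not proved---and they are not decorative: the agreement of the $1$-jet of $\sigma_P(R)$ with that of $\vert h_Z\vert$ along $\Sigma$ is exactly what identifies the Hamiltonian flow of $\sigma_P(R)$ restricted to $\Sigma$ with the lift of the Reeb flow, which is the whole point of the normal form in the QE argument. What is missing is a symplectic argument along the following lines: (i) on $\Sigma$ one has $\vert h_{Z_H}\vert\circ\chi=\vert h_Z\vert$, because both compute the symplectic invariant of the transverse Hessian of the common principal symbol (using $\{h_X,h_Y\}=-h_Z$ on $\Sigma$), and $\chi$ is a homogeneous symplectomorphism preserving the characteristic cone; (ii) $\{\vert h_{Z_H}\vert\circ\chi,\,g^\star\}=\mathrm{O}_\Sigma(\infty)$ (since $\{\vert h_{Z_H}\vert,g_H^\star\}=0$ exactly on the Heisenberg side), whereas $\{\vert h_Z\vert,g^\star\}=\mathrm{O}_\Sigma(2)$ (since $[Z,X],[Z,Y]\in D$); hence the Poisson bracket of the difference with $g^\star$ vanishes to order $2$, and because the linearized bicharacteristic flow of $g^\star$ acts on the conormal directions to $\Sigma$ as an invertible rotation, the linear term of the difference must vanish, giving the stated $\mathrm{O}_\Sigma(2)$; the $\mathrm{O}_\Sigma(3)$ statement for $\sigma_P(\Omega)$ then follows by dividing. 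Either supply such an argument, or build the extra normalization $h_{Z_H}\circ\chi=h_Z+\mathrm{O}_\Sigma(2)$ into the construction of $\chi$ itself; as written, this part of Theorem~\ref{thm_quantum_normal_form} is assumed rather than derived.
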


The latter remainders are in the sense of pseudo-differential operators of order $-\infty$.
In the flat Heisenberg case, there are no remainder terms, and we recover the operators $R_H$ and $\Omega_H$. The pseudo-differential operators $R$ and $\Omega$ can be seen as appropriate perturbations of $R_H$ and $\Omega_H$, designed such that the last two items of Theorem \ref{thm_quantum_normal_form} are satisfied.

Note that the operators $R$ and $\Omega$ depend on the microlocal neighborhood $\tilde U$ under consideration. This neighborhood can then be understood as a chart in the manifold $T^\star M$, in which the quantum normal form is valid.

\subsection{Sketch of proof of the variance estimate in the Heisenberg flat case}
As explained at the end of Section \ref{sec31}, in order to establish QE, it suffices to prove that $V(A)=0$ for every pseudo-differential operator $A$ of order $0$ satisfying $E(A)=0$.

We provide the proof of that fact in the Heisenberg flat case, that is, when $-\triangle_{sR}=R_H\Omega_H=\Omega_H R_H$. For simplicity of notation, we drop the index $H$ in what follows.

\begin{remark}
In the Heisenberg flat case, the microlocal factorization of $-\triangle_{sR}$ is global. This fact avoids many technicalities. In \cite{CHT-I} where the complete proof is done, we have to use microlocal charts where the quantum normal form of Theorem \ref{thm_quantum_normal_form} is valid. We have also to take care of remainder terms, since in the general case we do not have an exact factorization, and handling the additional terms raises additional difficulties.
\end{remark}

Let $A$ be a pseudo-differential operator of order $0$ satisfying $E(A)=0$. Let $a=\sigma_P(A)$ be its principal symbol.

\paragraph{Preparation of $A$.} We claim that we can modify $A$ without changing $a_{\vert\Sigma}$, so as to assume that $[A,\Omega]=0$.

Indeed, it suffices to average with respect to $\Omega$, as follows. Setting $A_s = \exp(is\Omega) A \exp(-is\Omega)$, for $s\in\R$, we have, by the Egorov theorem, $\sigma_P(A_s) = a\circ \exp (t\vec\sigma_P(\Omega))$. Since $\sigma_P(\Omega)=(h_X^2+h_Y^2)/\vert h_Z\vert=0$ along $\Sigma$, it follows that $\sigma_P(A_s)_{\vert\Sigma}=a$.
Now, setting $\bar A = \frac{1}{2\pi}\int_0^{2\pi} A_s\, ds$, we have $\sigma_P(\bar A)_{\vert\Sigma}=a$. Using that $\exp(2i\pi\Omega)=\mathrm{id}$ and that $\frac{d}{ds} A_s=i[\Omega,A_s]$, we infer that $[\Omega,\bar A]=0$. Then we replace $A$ with $\bar A$.

\paragraph{Averaging with respect to $R$.} Let us now set $A_t = \exp(-itR)A\exp(itR)$, for $t\in\R$. By the Egorov theorem, we have $a_t=\sigma_P(A_t)=a\circ\exp(t\vec h_Z)$, where $\vec h_Z$ is the Hamiltonian vector field generated by the Hamiltonian function $h_Z$.
%We have $\frac{d}{dt}A_t=i[A_t,R]$. 
For $T>0$, we define
$\bar A_T = \frac{1}{T}\int_0^T A_t\, dt$.
To prove that $V(A)=0$, it suffices to prove that
\begin{equation}\label{eq1}
V(A-\bar A_T)=0,
\end{equation}
and that
\begin{equation}\label{eq2}
\lim_{T\rightarrow +\infty}V(\bar A_T)=0 .
\end{equation}
The next two lemmas are devoted to prove \eqref{eq1} and \eqref{eq2}.
Ergodicity will be used to prove \eqref{eq2}.

\begin{lemma}
We have $V(A-A_t)=0$, for every $t\in\R$. As a consequence, we have $V(A-\bar A_T)=0$, for every $T>0$.
\end{lemma}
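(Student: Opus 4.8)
The plan is to prove the stronger pointwise statement that, for the prepared operator $A$ (which we may assume satisfies $[A,\Omega]=0$, $A=A^\star$, $E(A)=0$), every diagonal matrix element of $A-A_t$ vanishes exactly:
\[
\langle (A-A_t)\phi_n,\phi_n\rangle = 0 \qquad\text{for all } n \text{ and all } t\in\R .
\]
Once this is established, both $V(A-A_t)=0$ and $V(A-\bar A_T)=0$ are immediate, since the variance is a Ces\`aro average of the squares of precisely these quantities. The whole point is thus to turn the operator conjugation $A_t=e^{-itR}Ae^{itR}$ into a family of \emph{scalar phases} acting on $\phi_n$; this is made possible by the joint structure of $R$, $\Omega$ and $-\triangle_{sR}=R\Omega$.

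First I would exploit that $\Omega$ commutes with $-\triangle_{sR}$. Indeed $[R,\Omega]=0$ together with $-\triangle_{sR}=R\Omega$ gives $[\Omega,\triangle_{sR}]=0$, so the spectral projectors $\Pi_\omega$ of $\Omega$ commute with $\triangle_{sR}$. In the flat Heisenberg case $\Omega$ is a harmonic oscillator, hence a positive elliptic operator whose spectrum is contained in the positive integers (it is $\{2\ell+1:\ell\in\N\}$, consistent with $\exp(2i\pi\Omega)=\mathrm{id}$), so in particular $\omega\geq 1$ on the relevant part of the spectrum. Writing $\phi_n=\sum_\omega \Pi_\omega\phi_n$, each piece $\Pi_\omega\phi_n$ is again an eigenfunction of $\triangle_{sR}$ for the eigenvalue $\lambda_n$, because $\triangle_{sR}(\Pi_\omega\phi_n)=\Pi_\omega\triangle_{sR}\phi_n=-\lambda_n\Pi_\omega\phi_n$.

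The key step is that, on $\mathrm{Ran}\,\Pi_\omega$, the identity $-\triangle_{sR}=R\Omega$ reads $R=-\triangle_{sR}/\omega$, so $e^{itR}$ acts there as multiplication by the scalar $e^{it\lambda_n/\omega}$, giving $e^{itR}\phi_n=\sum_\omega e^{it\lambda_n/\omega}\,\Pi_\omega\phi_n$. I would then insert this into $\langle A_t\phi_n,\phi_n\rangle=\langle A\,e^{itR}\phi_n,e^{itR}\phi_n\rangle$ and use the preparation $[A,\Omega]=0$: since $A$ commutes with every $\Pi_\omega$, it preserves each summand $\mathrm{Ran}\,\Pi_\omega$, so all cross terms $\langle A\Pi_\omega\phi_n,\Pi_{\omega'}\phi_n\rangle$ with $\omega\neq\omega'$ vanish by orthogonality, while on the diagonal terms $\omega=\omega'$ the phases cancel since $|e^{it\lambda_n/\omega}|^2=1$. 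This yields $\langle A_t\phi_n,\phi_n\rangle=\sum_\omega\langle A\Pi_\omega\phi_n,\Pi_\omega\phi_n\rangle=\langle A\phi_n,\phi_n\rangle$, which is the claimed vanishing. The consequence for $\bar A_T$ then follows by averaging, since $\langle(A-\bar A_T)\phi_n,\phi_n\rangle=\frac{1}{T}\int_0^T\langle(A-A_t)\phi_n,\phi_n\rangle\,dt=0$.

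The step I expect to require the most care is the degeneracy of the factorization. The relation $R=-\triangle_{sR}/\omega$, and the very definition of $\Omega$ (through $R^{-1/2}$), break down on the elliptic part of the spectrum, namely the torus modes $\{2\pi(j^2+k^2)\}$ on which $R$ vanishes. These modes, however, form a density-zero subset of the spectrum, their counting function being $\mathrm{O}(\lambda)$ while $N(\lambda)\sim \tfrac{P(M)}{32}\lambda^2$, so, using the uniform bound $|\langle(A-A_t)\phi_n,\phi_n\rangle|\leq 2\|A\|$, they contribute $\mathrm{o}(1)$ to the Ces\`aro average defining the variance and may be discarded. In the general non-flat 3D contact case the factorization is only microlocal and comes with remainders $\mathrm{O}_\Sigma(\infty)$ and a dependence on microlocal charts (Theorem \ref{thm_quantum_normal_form}); there the diagonal elements vanish only approximately, and the genuine difficulty is to control these errors, which is the technical content deferred to \cite{CHT-I}.
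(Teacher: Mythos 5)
Your proof is correct, and it reaches the paper's conclusion by a genuinely different mechanism. The paper argues by pure commutator calculus: since $\frac{d}{dt}A_t=i[A_t,R]$, it suffices to kill the diagonal matrix elements of $[A_t,R]$; the Jacobi identity together with $[A,\Omega]=0$ and $[R,\Omega]=0$ first propagates $[A_t,\Omega]=e^{-it\,\mathrm{ad}(R)}[A,\Omega]=0$ for all $t$, and then, writing $\phi_n=\frac{1}{\lambda_n}R\Omega\phi_n$, the diagonal element $\langle [A_t,R]\phi_n,\phi_n\rangle$ is recast as $\pm\frac{1}{\lambda_n}\langle [RA_tR,\Omega]\phi_n,\phi_n\rangle$, which vanishes because $RA_tR$ commutes with $\Omega$. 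You instead diagonalize: using $[\Omega,\triangle_{sR}]=0$ you split $\phi_n$ along the spectral projectors $\Pi_\omega$ of $\Omega$, observe that $e^{itR}$ acts on each block as the scalar phase $e^{it\lambda_n/\omega}$, and conclude by orthogonality of the blocks (from $[A,\Omega]=0$) and cancellation of the phases. Both routes actually establish the same strengthened statement, namely $\langle (A-A_t)\phi_n,\phi_n\rangle=0$ for every $n$ and $t$, which renders the passage to $\bar A_T$ trivial (the paper's appeal to Fubini and Jensen is only needed if one retains the variance formulation rather than the pointwise one). As for what each buys: your spectral-projector argument is more elementary and makes the mechanism transparent, and it forces you to confront the elliptic torus modes $2\pi(j^2+k^2)$, on which $R$ is not invertible and the factorization $-\triangle_{sR}=R\Omega$ fails; you dispose of them correctly by a density-zero counting argument (one should also note, to make the dichotomy clean, that the two families of eigenvalues never coincide, being integers versus $2\pi$ times integers, so no eigenfunction mixes the two types) --- a point the paper's sketch silently glosses over. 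Conversely, the paper's commutator calculus never invokes the spectral theorem for $\Omega$, and that is precisely what makes it robust in the general 3D contact case: there the normal form $-\triangle_{sR}=R\Omega+V_0+\mathrm{O}_\Sigma(\infty)$ holds only microlocally and modulo remainders, so exact spectral projectors of $\Omega$ are unavailable, whereas commutator identities degrade gracefully into commutator estimates; your last paragraph correctly identifies this as the real technical content deferred to \cite{CHT-I}.
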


\begin{proof}
Since $\frac{d}{dt} A_t=i[A_t,R]$, it suffices (by Cauchy-Schwarz) to prove that $V([A_t,R])=0$.
Using the Jacobi identity, and the fact that $[R,\Omega]=0$, we compute
\begin{equation*}
\frac{d}{dt} [A_t,\Omega] =  i[[A_t,R],\Omega] 
=  i [R,[\Omega,A_t]]+i[A_t,[R,\Omega]]
= -i \mathrm{ad}(R).[A_t,\Omega] ,
\end{equation*}
and since $[A,\Omega]=0$ by construction, we infer that $[A_t,\Omega] = e^{-it\,\mathrm{ad}(R)}[A,\Omega] =0$.

Now, for every integer $n$, we have
\begin{equation*}
\begin{split}
\langle [A_t,R]\phi_n,\phi_n\rangle
&= \langle A_tR\phi_n,\phi_n\rangle - \langle RA_t\phi_n,\phi_n\rangle \\
&= -\frac{1}{\lambda_n}\langle A_tR\phi_n,R\Omega\phi_n\rangle + \frac{1}{\lambda_n}\langle RA_tR\Omega\phi_n,\phi_n\rangle \qquad\textrm{because}\ \phi_n = -\frac{1}{\lambda_n}\triangle_{sR}\phi_n \\
&= \frac{1}{\lambda_n}\langle [RA_tR,\Omega]\phi_n,\phi_n\rangle\\
&= 0
\end{split}
\end{equation*}
because $[R,\Omega]=0$ and $[A_t,\Omega]=0$.
Hence $V(A-A_t)=0$ for every $t$. 

Using the Fubini theorem and the Jensen inequality, we easily infer that $V(A-\bar A_T)=0$.
\end{proof}

\begin{lemma}
We have $\displaystyle\lim_{T\rightarrow +\infty}V(\bar A_T)=0$.
\end{lemma}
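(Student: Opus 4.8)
The plan is to bound the variance by the Ces\`aro mean of the square, $V(\bar A_T)\leq E(\bar A_T^\star \bar A_T)$, and to evaluate the latter through the microlocal Weyl law of Theorem \ref{theo:weyl}. By the Egorov theorem used above, $\bar A_T$ is again a pseudo-differential operator of order $0$, with principal symbol the time-average
$$
\bar a_T = \frac{1}{T}\int_0^T a\circ\exp(t\vec h_Z)\, dt .
$$
Since the principal symbol of $\bar A_T^\star \bar A_T$ is $|\bar a_T|^2$ and this operator has order $0$, the microlocal Weyl law gives
$$
E(\bar A_T^\star \bar A_T) = \int_{S\Sigma} |\bar a_T|^2\, dW_\triangle = \|\bar a_T\|_{L^2(W_\triangle)}^2 ,
$$
so everything reduces to proving that this quantity tends to $0$ as $T\to+\infty$. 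Note that the Weyl law is applied for each fixed $T$ and only afterwards do we let $T\to+\infty$, so no exchange of limits is needed.

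Next I would invoke ergodicity. On $S\Sigma$ the Hamiltonian flow of $h_Z$ preserves each of the two sheets $S\Sigma^\pm$ and restricts there to the lift of the Reeb flow, under which $W_\triangle$ is invariant (on each sheet it is half the pullback of the Reeb-invariant probability Popp measure $\nu$). The symbol $\bar a_T$ is exactly the ergodic time-average of $a_{\vert S\Sigma}$ under this flow. Because the Reeb flow is ergodic on $(M,\nu)$ and the two sheets $S\Sigma^\pm$ are the ergodic components of the lifted flow, the von Neumann mean ergodic theorem yields convergence of $\bar a_T$ in $L^2(S\Sigma, W_\triangle)$ to the function constant on each sheet equal to the sheet-averages
$$
I^+ = \int_M a(q,\alpha_g(q))\, d\nu ,
\qquad
I^- = \int_M a(q,-\alpha_g(q))\, d\nu .
$$

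Finally I use the two remaining facts. On the one hand, by hypothesis and Theorem \ref{theo:weyl}, $E(A)=\tfrac12(I^++I^-)=0$. On the other hand, since the eigenfunctions $\phi_n$ are \emph{real-valued}, only the even part of $a$ contributes to $\langle A\phi_n,\phi_n\rangle$; equivalently, one may assume $a(q,\alpha_g(q))=a(q,-\alpha_g(q))$, whence $I^+=I^-$. Combining the two relations forces $I^+=I^-=0$, so $\|\bar a_T\|_{L^2(W_\triangle)}^2\to 0$, which is the desired conclusion $\lim_{T\to+\infty}V(\bar A_T)=0$. The crux of the argument, and the place where the hypotheses genuinely enter, is precisely this last step: the two-sheeted structure of $S\Sigma$ a priori produces two distinct ergodic averages $I^\pm$, and it is exactly the real-valuedness of the eigenbasis (forcing evenness of the effective symbol) together with $E(A)=0$ that makes both of them vanish. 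Controlling the order-$1$ generator $R$ and the exactness of Egorov's theorem for its flow is straightforward in the flat Heisenberg case treated here, but would be the main technical obstacle in the general setting, where one must instead work with the remainder terms of Theorem \ref{thm_quantum_normal_form}.
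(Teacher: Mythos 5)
Up to its final paragraph your argument is the paper's proof step for step: the bound $V(\bar A_T)\leq E(\bar A_T^\star\bar A_T)$, the identification of the principal symbol of the average via Egorov, the microlocal Weyl law applied at each fixed $T$, and the von Neumann ergodic theorem on each of the two sheets $S\Sigma^\pm$, producing the sheet averages $I^\pm$. The gap is in how you dispose of the two-sheet obstruction. The numbers $I^\pm=\int_M a(q,\pm\alpha_g(q))\,d\nu$ depend on the symbol $a$ alone, so no property whatsoever of the eigenfunctions can imply $I^+=I^-$; the inference ``the $\phi_n$ are real-valued, hence one may assume $a(q,\alpha_g(q))=a(q,-\alpha_g(q))$, whence $I^+=I^-$'' is therefore not a proof but a relabeling of what remains to be shown. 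The true mechanism behind ``only the even part of $a$ contributes'' is an operator identity: denoting by $C$ complex conjugation, for real-valued $\phi_n$ one has $\langle CAC\,\phi_n,\phi_n\rangle=\overline{\langle A\phi_n,\phi_n\rangle}$, so that \emph{if} $A$ is self-adjoint then $\langle A\phi_n,\phi_n\rangle=\bigl\langle\tfrac12(A+CAC)\phi_n,\phi_n\bigr\rangle$ exactly, and $\tfrac12(A+CAC)$ is a pseudo-differential operator whose principal symbol is the even part of $a$. To exploit this you must (i) first reduce to self-adjoint $A$, since for general $A$ the even part only controls the real part of the matrix elements while $V$ involves their moduli squared; and (ii) perform the symmetrization at the very beginning and rerun the entire chain on the symmetrized operator: check that $E$ of it still vanishes (true, because $W_\triangle$ is even), redo the preparation step $[A,\Omega]=0$ and the $R$-averaging for it (note that conjugation by $C$ reverses the time direction of $e^{itR}$, so the symmetrization of $\bar A_T$ is not the forward average of the symmetrized operator). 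None of this is automatic.

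This is precisely the point on which the paper is careful: its own proof of this lemma concludes only ``under the additional assumption that $a$ is even with respect to $\Sigma$'', and it explicitly refers to \cite{CHT-I} for the fact that real-valuedness of the eigenbasis allows this assumption to be dropped. So your proposal is structurally identical to the paper's, but the one step where you claim to go further --- the one-line removal of the evenness hypothesis --- is exactly where the remaining mathematical content lies, and as written that line is incorrect.
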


\begin{proof}
We have $V(\bar A_T)\leq E(\bar A_T\bar A_T^*)$, with $\sigma_P(\bar A_T\bar A_T^*) = \vert a_T\vert^2 = \vert\frac{1}{T}\int_0^Ta_t\, dt\vert^2$.
By the microlocal Weyl law (Theorem \ref{theo:weyl}), we have
$$
E(\bar A_T\bar A_T^*) = \frac{1}{2} \sum_{\pm} \int_M \vert a_T(q,\pm\alpha_g(q))\vert^2\, d\nu ,
$$
but, using the Reeb flow $\mathcal{R}_t$,
\vspace{-2mm}
$$
f^\pm_T(q) = a_T(q,\pm\alpha_g(q)) = \frac{1}{T}\int_0^T a_t(q,\pm\alpha_g(q))\, dt = \frac{1}{T}\int_0^T a(\mathcal{R}_t(q),\pm\alpha_g(\mathcal{R}_t(q)))\, dt
$$
By ergodicity of the Reeb flow, and using the von Neumann ergodic theorem, we have
$$
f^\pm_T\underset{T\rightarrow +\infty}{\overset{L^2}{\longrightarrow}}\int_M a(q,\pm\alpha_g(q))\, d\nu=2E(A)=0,
$$
and the result follows, under the additional assumption that $a$ is even with respect to $\Sigma$. If the eigenfunctions are real-valued then this assumption can actually be dropped (see \cite{CHT-I}).
\end{proof}

\section{Perspectives and open problems}\label{sec4}

%\paragraph{Reeb periodic orbits as spectral invariants.}
%Many inverse spectral problems have been studied for Riemannian Laplacians but are open for sR Laplacians. In the 3D contact case, we have shed light, in this paper, on the crucial role of the Reeb flow associated with the contact structure. Actually, we conjecture that the periods of the Reeb flow are spectral invariants, in the sense that the spectrum of $\triangle_{sR}$ determines the periods of the Reeb flow.

\paragraph{Reeb flow and spectral theory.}
In addition to QE, there are several other relationships between the Reeb flow and the spectral theory of sR Laplacians in the 3D contact case.
For example, in the case where $M=S^3$, the Popp volume is a spectral invariant, which coincides with the inverse of the Arnold asymptotic linking number of the Reeb orbits. 
Moreover, using a global normal form, one can figure out a close relationship between the spectrum of an elliptic Toeplitz operator that is the quantization of the Reeb flow and the spectrum of $\triangle_{sR}$.
These arguments lead to conjecture that the periods of the periodic orbits of the Reeb flow are spectral invariants of $\triangle_{sR}$.

\paragraph{Measures in sR geometry.}
A general problem consists of showing the existence of the (micro)-local Weyl measures (defined in Definition \ref{defi:weyl-measure}), and of identifying it. 

In contrast to Riemannian geometry, in sR geometry there are several possible choices of intrinsic volume forms, for instance: the standard Hausdorff volume (which is defined with arbitrary coverings), the spherical Hausdorff volume (which is defined with ball coverings only, with balls of constant radius), the Popp volume. The latter one, which is always a smooth volume, was introduced and defined in the equiregular case in \cite{Mo-02}.
Note that for left-invariant (Lie group) sub-Riemannian manifolds, the Popp measure and the standard and spherical Hausdorff measures are left-invariant, and thus are proportional to the left-Haar measure. In other words they coincide up to a multiplying scalar.
In particular in the 3D contact case, the Popp measure and the standard and spherical Hausdorff measures are proportional, but the constant is not equal to $1$, and is not known.

All these measures do not coincide in general (if there is no group structure). More precisely, it is proved in \cite{AgrachevBarilariBoscain_CVPDE2012} that the Popp measure and the spherical Hausdorff measure are proportional for regular sR manifolds of dimensions $3$ and $4$, but in dimension larger than or equal to $5$, the spherical Hausdorff measure is not smooth in general and therefore differs from the Popp measure (which is always smooth by definition for equiregular distributions).

The notion of (microlocal, or local) Weyl measure is very natural in the spectral context, but seems to be new and has never been studied in sR geometry.
It is therefore of interest to compare it with other notions of measures and to investigate the following kind of question: does the Weyl measure coincide with an appropriate Hausdorff measure? What is its Radon-Nikodym derivative with respect to the Popp measure?

\paragraph{Equiregular sR structures.}
In Theorem \ref{theo:weyl}, we have obtained a microlocal Weyl law in the 3D contact case.
It is likely that the microlocal Weyl formula can be generalized to equiregular sub-Riemannian structures.

Let $(M,D,g)$ be a sub-Riemannian structure, where $M$ is a compact manifold of dimension $d$. Let $\triangle _{sR}$ be a sub-Riemannian Laplacian on $M$. We assume that $\mathrm{Lie}(D)=TM$ (H\"ormander's assumption, implying hypoellipticity).
We say that the horizontal distribution $D$ is \emph{equiregular}, if the sequence of subbundles $(D_i)_{i\in\N^*}$ defined by $D_0=\{0\}$, $D_1=D$, $D_{i+1}=D_i+[D_i,D]$, is such that the dimension of ${D_i}_q$ dot not depend on $q\in M$, for every integer $i$. Let $r$ be the smallest positive integer such that $D_r=TM$.
We set $\Sigma_j = D_j^\perp$, for $j=0,\ldots,r$.
\\
In the equiregular case, it is known (see \cite{Metivier1976}) that $N(\lambda)\sim \mathrm{Cst}\lambda^{Q/2}$, with $Q = \sum_{i=1}^r i\dim(D_i/D_{i-1})$ (Hausdorff dimension). 
Besides, as already said in Section \ref{sec32}, we already know that, if the microlocal Weyl measure exists, then $\mathrm{supp}(W_\triangle)\subset S\Sigma=S\Sigma_1$. Actually, we conjecture that
\begin{equation*}
\mathrm{supp}(W_\triangle) = S\Sigma_{r-1} ,
\end{equation*}
and that this result on the support is even valid in general, not only in the equiregular case.

\paragraph{Contact case in dimension $5$.} 
Theorem \ref{theo:main} has been established in the 3D contact case. It is natural to investigate the general contact case, however, already in the contact 5D case, the situation is much more complicated, and we cannot expect to have a Birkhoff normal form at the infinite order as in Theorem \ref{thm:BNF}. Indeed, in dimension $5$, the characteristic manifold $\Sigma$ is of codimension $4$, and we have two harmonic oscillators, which may have resonances according to the choice of the coefficients of the sR metric.
We expect however to be able, in that case, to derive a Birkhoff normal form at finite orders, and thanks to a quantized version of that normal form, to be able to derive QE under generic assumptions. Indeed, resonances should only occur on sets of measure zero in the generic case.

\paragraph{The role of singular curves.}
We briefly recall the definition of a singular curve. Let $(M,D,g)$ be a sR manifold. A curve $q(\cdot):[0,1]\rightarrow M$ is said to be \emph{horizontal} if it is absolutely continuous and if $\dot q(t)\in D_{q(t)}$ for almost every $t\in[0,1]$.
Let $q_0\in M$ be arbitrary. Let $\Omega(q_0)$ be the set of all horizontal curves $q(\cdot)$ such that $q(0)=q_0$. It is clear that $\Omega(q_0)$ is a Banach manifold, modeled on $L^1(0,1;\R^m)$ (with $m=\mathrm{rank}(D)$). Now, let $q_1\in M$ be another arbitrary point. The set $\Omega(q_0,q_1)$ of all $q(\cdot)\in \Omega(q_0)$ such that $q(1)=q_1$ may not be a manifold. Indeed, defining the end-point mapping $\mathrm{end}_{q_0}:\Omega(q_0)\rightarrow M$ by $\mathrm{end}_{q_0}(q(\cdot))=q(1)$, we have $\Omega(q_0,q_1) = (\mathrm{end}_{q_0})^{-1}(q_1)$, and the mapping $\mathrm{end}_{q_0}$ need not be a submersion. By definition, a singular curve $q(\cdot)$ is an horizontal curve that is a critical point of $\mathrm{end}_{q(0)}$, or, in other words, such that the differential $d\,\mathrm{end}_{q(0)}$ is not of full rank.

There are many clues leading one to think that singular curves may have a strong impact on the asymptotic spectral properties of the sR Laplacians. 
For instance, we expect that they have an impact on the Schwartz kernel in the representation through Feynman integrals. This representation has the form
$$
\exp(ith\triangle)f(x) = \int_{q(\cdot)\in\Omega(x)} e^{\frac{i}{h}\int_0^t L(q(s),\dot q(s))\, ds} f(q(t)) \, d\gamma(q(t)) ,
$$
and if we fix the terminal points of the paths, then we ``disintegrate" the measure to obtain the Schwartz kernel. We expect that singular curves cause a singularity in the kernel, which differs from the usual one.
In order to understand such issues, we provide hereafter several examples.

\paragraph{Martinet case.}
We consider a smooth compact connected manifold $M$ of dimension $3$, and a generic horizontal distribution $D$ of rank $2$, defined by $D=\ker\alpha$, with $\alpha$ a nontrivial one-form on $M$ such that $\alpha\wedge d\alpha$ vanishes transversally on a surface $S$ of dimension $2$. This surface is called the \emph{Martinet surface}.
Let $g$ be a Riemannian metric on $D$.

The local model in $\R^3$ near $0$ is given by $\alpha = dz - x^2\, dy$. Then $D$ is locally spanned by the two vector fields $X=\partial_x$ and $Y=\partial_y+x^2\partial_z$. Note that $S=\{x=0\}$ and that $[X,Y]=2x\partial_z$ vanishes along $S$. A Lie bracket of length two, $[X,[X,Y]]=2\partial_z$, is required to generate the missing direction, along $S$. Outside of $S$, the distribution $D$ is of contact type.

Note that the characteristic manifold $\Sigma$ is not symplectic. This is equivalent to the existence of nontrivial singular curves. The Martinet case is indeed a well known sR model in which there are nontrivial singular curves, that are minimizing, and locally foliate the Martinet surface $S$ (see \cite{Mo-02}).

We assume that $L=D\cap TS$ is a line bundle over $S$ (otherwise, in the general case, $L$ may have generic singularities): hence $S$ is either a torus or a Klein bottle.
Let $dP$ be the Popp volume on $M\setminus S$. Near $S$, we have $dP\sim d\nu\otimes\vert\frac{d\phi}{\phi}\vert$, where $\nu$ is a smooth measure on $S$ and $\phi=0$ is a local equation of $S$.

Given any smooth measure $\mu$ on $M$, we consider the corresponding sR Laplacian, and some eigenbasis of it.
We are able to obtain the following local Weyl law (see \cite{CHT-II}).

\begin{proposition}\label{prop_Martinet}
For every continuous function $f$ on $M$, we have
$$
\sum_{\lambda_n\leq\lambda}\int_M f\vert\phi_n\vert^2\,d\mu\sim\frac{\int_S f\,d\nu}{32} \lambda^2\log\lambda,
$$
as $\lambda\rightarrow+\infty$.
In particular, $N(\lambda)\sim \frac{\nu(S)}{32} \lambda^2\log\lambda$.
\end{proposition}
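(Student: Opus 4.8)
The plan is to reduce the statement, exactly as in the proof of Theorem \ref{theo:weyl}, to a small-time analysis of the $f$-localized sub-Riemannian heat trace, and then to invert that analysis by a Tauberian argument. Writing $K_t$ for the Schwartz kernel (with respect to $\mu$) of the heat semigroup $e^{t\triangle_{sR}}$, one has
\begin{equation*}
\sum_{n} e^{-t\lambda_n}\int_M f\,|\phi_n|^2\,d\mu = \int_M f(q)\,K_t(q,q)\,d\mu(q),
\end{equation*}
so it suffices to prove that $\int_M f\,K_t(\cdot,\cdot)\,d\mu \sim \frac{1}{16}\big(\int_S f\,d\nu\big)\,t^{-2}\log(1/t)$ as $t\to 0^+$. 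As in Remark \ref{rk:changemu1}, replacing $\mu$ by another smooth volume perturbs $\triangle_{sR}$ only by a bounded zeroth-order operator, which does not affect the leading heat-trace behaviour; I would therefore fix a convenient $\mu$ and, since the Tauberian step requires positivity, decompose $f=f_+-f_-$ into its nonnegative parts and treat each separately.

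First I would describe the on-diagonal kernel \emph{away} from the Martinet surface. At a point $q$ at distance $\phi=\phi(q)$ from $S$ the contact structure is nondegenerate, with bracket $[X,Y]$ of strength comparable to $\phi$, and is locally modelled on a rescaled Heisenberg structure. Summing the harmonic-oscillator eigenvalues $(2\ell+1)|b\zeta|$ over $\ell$ and integrating over the Fourier variables yields $K_t(q,q)\sim \frac{c_0}{t^2}\,\rho_P(q)$, where $\rho_P=dP/d\mu$ is the Popp density and $c_0=\frac{1}{16}$ is the universal contact constant (consistency check: feeding $K_t(q,q)\sim\frac{c_0}{t^2}\rho_P$ into the ordinary, log-free Karamata theorem reproduces the contact Weyl law $N(\lambda)\sim\frac{P(M)}{32}\lambda^2$ of Theorem \ref{theo:weyl} precisely when $c_0=\frac{1}{16}$). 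By hypothesis $\rho_P\sim c/|\phi|$ near $S$, i.e. $dP\sim d\nu\otimes|d\phi/\phi|$, so this contact approximation is not integrable across $S$ and must break down in a neighbourhood of it.

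The decisive point is to locate the scale at which this happens and to control the resulting boundary layer. Matching the oscillator width $|b\zeta|^{-1/2}$ of the dominant modes (those with $(2\ell+1)|b\zeta|\sim 1/t$) against the distance $\phi$ shows that the contact regime is valid precisely for $|\phi|\gg\sqrt t$, while for $|\phi|\lesssim\sqrt t$ the operator is governed by the genuinely Martinet model $-\partial_x^2+(\eta+\zeta x^2)^2$; the rescaling $x\sim|\zeta|^{-1/3}$ turns this into the fixed quartic family $-\partial_{\tilde x}^2+(\tilde\eta\pm\tilde x^2)^2$, with eigenvalues $|\zeta|^{2/3}e_k(\tilde\eta)$, whose contribution to the trace is of order $t^{-2}$ and carries no logarithm. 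Consequently
\begin{equation*}
\int_M f\,K_t\,d\mu \sim \frac{c_0}{t^2}\int_{|\phi|\gtrsim\sqrt t} f\,dP \sim \frac{1}{16\,t^2}\Big(\int_S f\,d\nu\Big)\log(1/t),
\end{equation*}
the logarithm arising solely from integrating the divergent Popp measure $dP\sim d\nu\otimes|d\phi/\phi|$ down to the cutoff $|\phi|\sim\sqrt t$ (since $\int_{|\phi|\gtrsim\sqrt t}|d\phi/\phi|\sim\log(1/t)$). A Karamata Tauberian theorem with a slowly-varying logarithmic factor then converts $\mathrm{Tr}(f\,e^{t\triangle_{sR}})\sim C\,t^{-2}\log(1/t)$ into $\sum_{\lambda_n\leq\lambda}\int_M f\,|\phi_n|^2\,d\mu\sim\frac{C}{\Gamma(3)}\lambda^2\log\lambda=\frac{C}{2}\lambda^2\log\lambda$; with $C=\frac{1}{16}\int_S f\,d\nu$ this is exactly the claimed $\frac{1}{32}\big(\int_S f\,d\nu\big)\lambda^2\log\lambda$, and the choice $f\equiv 1$ gives the counting asymptotics.

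The hard part will be the uniform control of the heat kernel \emph{through} the degeneration of the contact structure near $S$: rigorously matching the contact parametrix (valid for $|\phi|\gg\sqrt t$) with the rescaled Martinet parametrix (valid for $|\phi|\lesssim\sqrt t$), and proving that the boundary-layer contribution is genuinely $O(t^{-2})$ — carrying no logarithm and remaining subleading in the constant. This demands estimates on the oscillator/quartic spectral sums that are uniform in $\phi$, together with a careful treatment of the transition region $|\phi|\sim\sqrt t$, where all of the analytic difficulty is concentrated.
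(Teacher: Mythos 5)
Your reduction to the heat trace plus a log-Karamata inversion is the right frame, and your heuristic bookkeeping is essentially flawless: the contact constant $c_0=\frac{1}{16}$ (forced by consistency with Theorem \ref{theo:weyl}), the transition scale $|\phi|\sim\sqrt{t}$, the boundary-layer contribution $t^{-5/2}\cdot\sqrt{t}=t^{-2}$ (consistent with the on-diagonal $t^{-5/2}$ behaviour on $S$), and the final $\frac{1}{32}\lambda^2\log\lambda$ all check out. But as a proof there is a genuine gap, and it sits exactly where you wrote ``the hard part'': the uniform control of $K_t(q,q)$ through the degeneration is not a deferred technicality, it \emph{is} the proposition. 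Your argument needs, at minimum, a two-sided bound of the form $K_t(q,q)\lesssim\min\bigl(t^{-2}|\phi(q)|^{-1},\,t^{-5/2}\bigr)$ together with the asymptotics $K_t(q,q)=\frac{1}{16}t^{-2}\rho_P(q)\,(1+o(1))$ holding \emph{uniformly} in the regime $|\phi(q)|/\sqrt{t}\to\infty$, with errors still $o(t^{-2}\log(1/t))$ after integration against $d\mu$; no construction of the matched contact and quartic/Martinet parametrices that would deliver such bounds is given or even outlined. This is precisely the obstruction the paper flags when it notes that Proposition \ref{prop_Martinet} \emph{cannot} be obtained from the asymptotic estimates of the heat kernel along the diagonal, because those estimates are not uniform near $S$: your proposal is a (more sophisticated, $t$-dependent) version of the diagonal-asymptotics route, and all of its analytic content is concentrated in the unproven uniformity claim.

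The paper's own proof is structured so that no pointwise uniform estimate is ever needed. It first computes the full localized trace $\mathrm{Tr}(f e^{-t\triangle})$ in the \emph{flat} Martinet model by a global rescaling (there the logarithm appears on the Fourier side, as a logarithmically divergent integral in the dual variables regularized by the scaling, rather than from a spatial cutoff at $|\phi|\sim\sqrt{t}$); it then transfers this to the locally flat case using locality of the heat expansion on the diagonal, to small cubes by Dirichlet--Dirichlet bracketing, and finally to an arbitrary metric and smooth measure by local quasi-isometries and minimax considerations. These are comparisons at the level of quadratic forms and counting functions, which are robust exactly where kernel estimates are delicate. To complete your route you would have to prove the uniform matched-parametrix estimates yourself (a substantial piece of analysis, in the spirit of uniform magnetic-bottle asymptotics \`a la Montgomery \cite{Mo-95}); alternatively, replace the boundary-layer matching by the paper's bracketing/minimax scheme, which only requires the model computation globally, not pointwise. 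Your use of Remark \ref{rk:changemu1} to normalize the measure is fine in spirit, but note it also requires an argument (Duhamel or minimax) that a bounded potential does not disturb the $\lambda^2\log\lambda$ leading term of the \emph{weighted} counting function, since the eigenfunctions change along with the eigenvalues.
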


\begin{corollary}\label{cor_Martinet}
There exists a density-one subsequence $(n_j)_{j\in\N^*}$ such that
$$
\lim_{j\rightarrow+\infty} \int_M f\vert\phi_{n_j}\vert^2\,d\mu=0,
$$
for every continuous function $f$ on $M$, vanishing near $S$.
\end{corollary}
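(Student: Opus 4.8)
The plan is to derive Corollary~\ref{cor_Martinet} from the Martinet local Weyl law (Proposition~\ref{prop_Martinet}) by the same Cesàro-mean plus Koopman--von Neumann scheme that underlies the proof of Theorem~\ref{theo:main}. The crucial observation is that the limiting measure $\nu$ produced by Proposition~\ref{prop_Martinet} is carried by the Martinet surface $S$. Hence, for any continuous $f$ vanishing in a neighborhood of $S$, one has $\int_S f\,d\nu=0$, and dividing the asymptotics of Proposition~\ref{prop_Martinet} (which then reads $\sum_{\lambda_n\leq\lambda}\int_M f\,|\phi_n|^2\,d\mu=\mathrm{o}(\lambda^2\log\lambda)$) by $N(\lambda)\sim\frac{\nu(S)}{32}\lambda^2\log\lambda$ gives at once the vanishing Cesàro mean
\[
E(f)=\lim_{\lambda\to+\infty}\frac{1}{N(\lambda)}\sum_{\lambda_n\leq\lambda}\int_M f\,|\phi_n|^2\,d\mu=\frac{\int_S f\,d\nu}{\nu(S)}=0 .
\]

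First I would reduce to nonnegative symbols, so that the Koopman--von Neumann lemma of Section~\ref{sec31} applies (note that the multiplication operator $f$ has fiber-independent symbol, so the evenness issues of Theorem~\ref{theo:main} never arise). Fix an exhaustion $K_1\subset K_2\subset\cdots$ of the open set $M\setminus S$ by compact sets with $\bigcup_n K_n=M\setminus S$, and for each $n$ choose a continuous cutoff $\chi_n\colon M\to[0,1]$ equal to $1$ on $K_n$ and vanishing near $S$. Each $\chi_n$ vanishes near $S$, so the previous step gives $E(\chi_n)=0$; since the quantities $\int_M \chi_n\,|\phi_j|^2\,d\mu$ are nonnegative and bounded by $1$, the Koopman--von Neumann lemma produces a density-one set $\mathcal S_n\subset\N^*$ along which $\int_M \chi_n\,|\phi_j|^2\,d\mu\to 0$.

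The uniformity over all admissible $f$ is then obtained by a diagonal extraction, exactly as in the passage to QE at the end of Section~\ref{sec31}: from the countably many density-one sets $(\mathcal S_n)_n$ one constructs a single density-one set $\mathcal S=\{n_j\}$ with $\mathcal S\setminus\mathcal S_n$ finite for every $n$, so that $\int_M \chi_n\,|\phi_{n_j}|^2\,d\mu\to 0$ as $j\to\infty$, for each fixed $n$. Now let $f$ be an arbitrary continuous function vanishing near $S$. Its support is a compact subset of $M\setminus S$, hence contained in some $K_n$, whence $|f|\leq \|f\|_\infty\,\chi_n$. The domination
\[
\Big|\int_M f\,|\phi_{n_j}|^2\,d\mu\Big|\leq \|f\|_\infty\int_M \chi_n\,|\phi_{n_j}|^2\,d\mu\xrightarrow[j\to\infty]{} 0
\]
yields the claim along the universal subsequence $(n_j)$.

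The only genuinely delicate point is the diagonal construction of the single density-one subsequence $\mathcal S$: the intersection of countably many density-one sets need not be density one, so one must use the standard ``eventually contained'' extraction rather than a naive intersection. Everything else is bookkeeping --- the reduction to nonnegative symbols via the cutoffs $\chi_n$ and the final domination step both rest only on the fact that $\nu$ is supported on $S$ together with the ready-made Weyl asymptotics of Proposition~\ref{prop_Martinet}.
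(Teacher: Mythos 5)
Your proposal is correct and follows exactly the route the paper intends (the corollary is stated without explicit proof, but it is the Ces\`aro-mean--plus--Koopman--von Neumann machinery of Section~\ref{sec31} applied to Proposition~\ref{prop_Martinet}): since $\int_S f\,d\nu=0$ for $f$ vanishing near $S$, the Ces\`aro means vanish, and your cutoff exhaustion, diagonal extraction of a single density-one set, and final domination step are precisely the standard ``diagonal argument'' the paper invokes. The two points you single out --- reading the asymptotics with zero coefficient as $\mathrm{o}(\lambda^2\log\lambda)$, and avoiding a naive countable intersection of density-one sets --- are indeed the only delicate ones, and you handle both correctly.
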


Note that Proposition \ref{prop_Martinet} cannot be obtained from the asymptotic estimates of the heat kernel along the diagonal, because those estimates are not uniform: along the diagonal, the asymptotics is in $t^{-5/2}$ on $S$, and in $C(q)t^{-2}$ outside of $S$, with $C(q)$ bounded but not integrable near $S$.
The proof of Proposition \ref{prop_Martinet} consists of first computing the asymptotic behavior of $\mathrm{Tr}(fe^{-t\triangle})$ as $t\rightarrow 0^+$ in the flat Martinet case, by rescaling. Then, using the locality of the heat kernel expansion along the diagonal, the result is obtained in the locally flat case. Using Dirichlet-Dirichlet bracketing, the result is then established in small cubes with a flat metric, and is finally extended using local quasi-isometries and minimax considerations.

It might be expected that the singular flow plays an important role in the asymptotic spectral properties of the sR Laplacian. In particular, it is tempting to think that a good assumption is the ergodicity of the abnormal geodesics (lifts of the singular curves in the cotangent space) in the characteristic distribution $\ker\omega_{\vert\Sigma}$, at least, under the assumption that there indeed exists a characteristic vector field leaving $\nu$ invariant.

\paragraph{Relationship between singular curves and magnetic fields.}
This paragraph is inspired by the paper \cite{Mo-95}. Let us consider a manifold $N$ of dimension $2$, endowed with a flat Riemannian metric and with its canonical Riemannian volume. We consider the Laplacian
$$
\triangle = - ( \partial_x -a \partial_z)^2 - (\partial_y -b \partial_z )^2  ,
$$
with $z \in \R/2\pi\Z $ and $A=a\,dx+b\,dy $ the magnetic potential on $N$. Setting $X=\partial_x -a \partial_z$ and $Y=\partial_y -b \partial_z $, we have $[X,Y]=B \,\partial_z $
where $dA=B\,dx\wedge dy$ is the magnetic field.
We assume that $B$ vanishes on a closed curve $\Gamma $ with a non-zero differential.
Then $\alpha =a\,dx +b\,dy + dz$ defines a Martinet distribution on $M = N\times \R/2\pi\Z $,  and the Martinet surface is $S=\Gamma \times \R/2\pi\Z$.
We have, then, $\alpha_g =\frac{1}{B  }(a\,dx +b\,dy + dz )$ and $\nu = \frac{1}{\Vert dB\Vert} \vert ds \, dz\vert $, where $s$ is the arc-length along $\Gamma$.
The previous expressions are still valid if the metric is not flat.
The characteristic direction is given by $A(\dot{\gamma}(s))\,ds +dz =0$ if $\Gamma$ is parametrized by the arc-length. It is always possible to choose a gauge $A$ near $\Gamma$ such that $A(\dot{\gamma})=A_0$ is constant. Using this gauge, the measure $\nu$ is invariant under the vector field $\Vert dB \Vert (\partial_s - A_0 \partial_z)$. The corresponding dynamics is ergodic if $2\pi A_0/\mathrm{length} (\Gamma)$ is irrational.

Another case of interest is the quasi-contact case in dimension $4$.
It is related to magnetic fields in dimension $3$. In this case, there exist some nontrivial singular curves that correspond to lines of a magnetic field.
It seems that the singularities can only occur at isolated points. We might then expect to have QE if there is only one such point, and otherwise a tunnel effect might occur between two different points.

\paragraph{Almost-Riemannian geometry.}
For the sake of simplicity, we restrict ourselves to almost-Riemannian (aR) structures on surfaces.
Let $M$ be a smooth compact connected manifold of dimension $2$. An aR structure on $M$ is locally given by two vector fields $X$ and $Y$ that generate, outside of a singular curve $S$, the tangent space of the surface $M$. We assume that, on $S$, the vector fields $X$, $Y$ and $[X,Y]$ generate $TM$.
The simplest example is the so-called \emph{Grushin case} and is given locally by $X=\partial_x$ and $Y=x\partial_y$. 
Outside of $S$, the vector fields $X$ and $Y$ define the Riemannian metric $g =\nu_X^2 + \nu_Y^2$, where $(\nu_X,\nu_Y)$ is dual to $(X,Y)$. The volume $\mu_{aR}$ associated with $g$ is singular along $S$.
In the Grushin case, we have $g=dx^2+(dy/x)^2$ and $d\mu_{aR}=\vert dx\, dy\vert/\vert x\vert$.

Let $d\mu$ be any volume form on $M$.
We consider the aR Laplacian $\triangle_{aR} = -X^\star X -Y^\star Y$, where the adjoints are taken with respect to $\mu$.
It follows from Remark \ref{rk:changemu1} that the aR Laplacians associated with two different volumes are unitary equivalent up to a potential of the order of $d(\cdot,S)^{-2}$. If $\mu=\mu_{aR}$ then the resulting operator is singular on $S$, however it is essentially selfadjoint on $M\setminus S$ (see \cite{BL-13}).

In this Grushin case, we prove in \cite{CHT-II} a result that is similar to the Martinet case, namely, that for every continuous function $f$ on $M$, we have
$$
\sum_{\lambda_n\leq\lambda}\int_M f\vert\phi_n\vert^2\,d\mu\sim\frac{\int_S f\,d\nu}{4\pi} \lambda\log\lambda,
$$
as $\lambda\rightarrow+\infty$ (with, as well, Corollary \ref{cor_Martinet} as a consequence).

\begin{remark}
In any of the above singular cases (Martinet, almost-Riemannian), one may think of applying the desingularization procedure of \cite{RS1976}. For instance, the Martinet case can be seen as the projection of the so-called Engel case, which refers to an equiregular horizontal distribution of rank $2$ in dimension $4$; the Grushin case can be seen as the projection of the 3D contact case. However, in view of obtaining a microlocal Weyl law, it is not clear whether or not this desingularization can be used in a relevant way, because when projecting one has to use integrals of the Schwartz kernel outside of the diagonal, that are not known.
\end{remark}

\begin{remark}
In order to study sR structures in which the rank of the horizontal distribution $D$ may not be constant, a more general definition of a sR structure has been introduced (although not exactly with those terms) in \cite{ABCGS2010}.

A relative tangent bundle on a manifold $M$ is a couple $(H,\xi)$, where $\pi:H\rightarrow M$ is a smooth fibration and $\xi:H\rightarrow TM$ is a smooth bundle morphism over $M$. The horizontal distribution is defined by $D=\xi(H)\subset TM$.
% Note that the wording ``relative tangent bundle" is not used in \cite{ABC10}.
A vector field $X$ on $M$ is said to be horizontal if there exists a section $u$ of $H$ such that $X=\xi(u)$. An horizontal curve $q(\cdot):[0,1]\rightarrow M$ is an absolutely continuous curve for which there exists a section $u$ of $H$ such that $\dot q(t) = \xi_{q(t)}(u(t))$, for almost every $t\in[0,1]$.

A sR structure (with possibly varying rank) is then more generally defined by a quadruple $(M,H,\xi,g)$, where $M$ is a manifold, $(H,\xi)$ is a relative tangent bundle on $M$, and $g$ is a Riemannian metric on $H$.
This definition may be used in order to provide a unifying context for usual sR and aR geometries.
\end{remark}

\paragraph{Microlocal properties of SR wave equations.}
It is interesting to investigate properties of wave equations associated with a sR Laplacian.
One of the first questions that are arising is the question of microlocalization. More precisely, let us assume that the wave front of Cauchy data $(u_0,u_1)$ is a subset of a cone $K$ with compact base contained in an open cone $C$. Does there exist $\varepsilon(K,C) >0$ (not depending on the Cauchy data) such that the wave front of the solution is contained in $C$ for $\vert t\vert\leq \varepsilon$?
This question is a priori far from trivial because the sR cut-locus and the sR conjugate locus of a point always contain this point in their closure.
Another natural (simpler) question is to establish finite propagation of waves in the sR context.

\paragraph{Quantum limits.}
The decomposition and the statement on $\beta_0$ given in Theorem \ref{thm2} are actually valid for any sR Laplacian, not only in the 3D contact case. A stimulating open question is to establish invariance properties of the measure $\beta_\infty$ for more general sR geometries.

\end{document}